\documentclass[11pt]{amsart}
\usepackage[margin=1in]{geometry}

\usepackage{amssymb}
\usepackage{amsthm}
\usepackage{amsmath}
\usepackage{mathrsfs}
\usepackage{amsbsy}
\usepackage{bm}
\usepackage{hyperref}
\usepackage{tikz}
\usepackage{array}
\usepackage{enumerate}
\usepackage{bbm}
\usepackage{comment}
\usepackage{mathtools}
\usepackage{makecell} 
\usepackage{colortbl}
\usepackage{xcolor}
\usepackage{tabularray}

\DeclareFontFamily{U}{mathx}{}
\DeclareFontShape{U}{mathx}{m}{n}{<-> mathx10}{}
\DeclareSymbolFont{mathx}{U}{mathx}{m}{n}
\DeclareMathAccent{\widecheck}{0}{mathx}{"71}

\definecolor{lavender}{rgb}{0.4,0,1}
\definecolor{MyGreen}{rgb}{0,0.75,0}

\hypersetup{colorlinks=true, citecolor=lavender, linkcolor=lavender,urlcolor=lavender}

\usepackage{hhline}
\allowdisplaybreaks
\usepackage[noadjust]{cite}

\usepackage{caption}
\usepackage[noabbrev,capitalise,nameinlink]{cleveref}
\crefname{conjecture}{Conjecture}{Conjectures}

\newtheorem{theorem}{Theorem}[section]

\newtheorem{corollary}[theorem]{Corollary}

\newtheorem{lemma}[theorem]{Lemma}

\theoremstyle{definition}
\newtheorem{definition}[theorem]{Definition}

\newcommand{\Mac}{\mathrm{Mac}}
\newcommand{\MacPop}{\mathrm{MacPop}}
\newcommand{\Pop}{\mathrm{Pop}}
\newcommand{\U}{\mathrm{U}}
\renewcommand{\L}{\mathrm{D}}
\newcommand{\Des}{\mathrm{Des}} 
\newcommand{\leqb}{\leq_\mathsf{B}}
\newcommand{\leqr}{\leq_\mathsf{L}}
\newcommand{\G}{\mathbf{G}} 
\newcommand{\ASM}{\mathrm{ASM}}
\newcommand{\rk}{\mathrm{rk}} 
\newcommand{\Mat}{\mathrm{Mat}} 
\newcommand{\RR}{\mathcal{R}} 
 
\newcommand{\del}{\mathrm{del}} 
\newcommand{\lk}{\mathrm{lk}} 
\newcommand{\Q}{\mathsf{Q}}

\newcommand{\dfn}[1]{\textcolor{blue}{\emph{#1}}}

\title{Weak Order on the MacNeille Completion of Bruhat Order} 

\author{Colin Defant}

\begin{document}

\begin{abstract} 
Let $\mathrm{Mac}(W)$ be the MacNeille completion of the Bruhat order of a Coxeter group $W$. We introduce an action of the $0$-Hecke monoid of type $W$ on $\mathrm{Mac}(W)$, which allows us to define a weak order and a descent set statistic on $\mathrm{Mac}(W)$. When $W$ is of type~$A$, we recover constructions of Hamaker and Reiner, which were originally formulated in terms of monotone triangles and alternating sign matrices. Using this action, we prove that certain unions of Knutson--Miller subword complexes are vertex-decomposable. By specializing to type~$A$, we prove a conjecture of Escobar, Klein, and Weigandt regarding Cohen--Macaulay ASM varieties. Along the way, we also exhibit a counterexample to a conjecture of Hamaker and Reiner regarding the poset topology of intervals in the ASM weak order. When $W$ is finite, our action also gives rise to a Boolean cell complex with one facet for each element of $\mathrm{Mac}(W)$. We prove that this is an abstract simplicial complex and that every linear extension of the weak order on $\mathrm{Mac}(W)$ is a shelling order for its facets. Finally, when $W$ is finite and irreducible, we use our $0$-Hecke action to introduce a noninvertible dynamical system on $\mathrm{Mac}(W)$ that we call the \emph{MacNeille pop-stack operator}, and we prove that the maximum number of iterations of this operator needed to reach the bottom state is $h-1$, where $h$ is the Coxeter number of $W$. 

This article is meant to serve as a case study in using large language models to automate the workflow of mathematical research. The proof of the conjecture of Escobar--Klein--Weigandt and the disproof of the conjecture of Hamaker--Reiner were obtained autonomously by ChatGPT~5.4 Pro. Other aspects of the paper were obtained mostly by the author, but ChatGPT expedited the process. We provide a detailed account of this interaction.      
\end{abstract} 

\maketitle

\section{Introduction} 
\subsection{MacNeille Completions} 
Let $P$ be a poset. An \dfn{order filter} of $P$ is a subset $F\subseteq P$ such that if $x\leq y$ and $x\in F$, then $y\in F$. For $X\subseteq P$, let us write \[\U(X)=\{p\in P:p\geq x \text{ for all }x\in X\}\quad\text{and}\quad\L(X)=\{p\in P:p\leq x \text{ for all }x\in X\}.\] The \dfn{MacNeille completion} of $P$, denoted $\Mac(P)$, is the set of all order filters $F$ of $P$ such that $F=\U(\L(F))$. We view $\Mac(P)$ as a poset under the reverse containment order. That is, for $F,F'\in\Mac(P)$, we write $F\leq F'$ if $F\supseteq F'$. There is a natural poset embedding $\iota\colon P\to\Mac(P)$ given by $\iota(x)=\U(\{x\})$.  

A \dfn{complete lattice} is a poset in which any subset has a greatest lower bound and a least upper bound. It is well known that $\Mac(P)$ is the smallest complete lattice (up to isomorphism) containing a subposet isomorphic to $P$. 

\subsection{Coxeter Systems and $0$-Hecke Monoids} 
Let $(W,S)$ be a Coxeter system, and let $e$ be the identity element of $W$. 
%We have the Coxeter presentation 
%\[W=\langle S: (ss')^{m(s,s')}=e\text{ for all }s,s'\in S\rangle,\] where $m(s,s)=1$ for all $s\in s$ and $m(s,s')=m(s',s)\in\{2,3,\ldots\}\cup\{\infty\}$ whenever $s\neq s'$. 
For $w\in W$, let $\ell(w)$ denote the Coxeter length of $w$, and let \[\Des(w)=\{s\in S:\ell(sw)<\ell(w)\}\] denote the set of (left) descents of $w$. 

%The \dfn{$0$-Hecke monoid of type $W$} is the monoid $H_W(0)$ generated by elements $T_s$ for $s\in S$, subject to the following relations: 
%\begin{alignat*}{2}
%\underbrace{T_s T_{s'} \cdots}_{m(s,s')} &= \underbrace{T_{s'} T_s \cdots}_{m(s,s')} \quad&& \text{for all distinct } s, s' \in S; \\
%T_s^2 &= T_s && \text{for all } s \in S. 
%\end{alignat*}
%Given a reduced word $s_1\cdots s_k$ of an element $w\in W$, we define $T_w=T_{s_1}\cdots T_{s_k}$; the defining relations of $H_W(0)$ ensure that this is well defined. In fact, the map $w\mapsto T_W$ is a bijection from $W$ to $H_W(0)$.  

For each $s\in S$, define $\pi_s\colon W\to W$ by 
\begin{equation}\label{eq:pis}
\pi_s(w)=\begin{cases}
    sw & \text{if } s\in \Des(w) \\
    w & \text{if } s\not\in\Des(w).
\end{cases}
\end{equation} 
The monoid $H_W(0)$ generated by $\{\pi_s:s\in S\}$ is the \dfn{$0$-Hecke monoid of type $W$}. 
For each $w\in W$, we can define $\pi_w=\pi_{s_1}\cdots\pi_{s_k}$, where $s_1\cdots s_k$ is a reduced word for $w$; this is well defined. The map $w\mapsto\pi_w$ is a bijection from $W$ to $H_W(0)$. 

We view $W$ as a poset under the (strong) Bruhat order, which we denote by $\leqb$. 
For $X\subseteq W$ and $u\in W$, let 
\begin{equation}\label{eq:piuX}
\pi_u(X)=\{\pi_u(x):x\in X\},
\end{equation} 
and let 
\begin{equation}\label{eq:nabla} 
\nabla(X)=\{w\in W:x\leqb w\text{ for some }x\in X\}.
\end{equation} 

Our first theorem is the following. 
\begin{theorem}\label{thm:descents} 
Let $F\in\Mac(W)$ and $u\in W$. We have $\nabla(\pi_u(F))\in\Mac(W)$. Moreover, for every set $Y\subseteq W$, we have $\nabla(\pi_u(\nabla(Y)))=\nabla(\pi_u(Y))$.  
\end{theorem}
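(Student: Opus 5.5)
The plan is to isolate one structural fact about $\pi_s$ and deduce everything from it. For $s\in S$ and $w\in W$, set $\pi_s^\vee(w)=\max_{\leqb}(w,sw)$; this is $sw$ if $s\notin\Des(w)$ and $w$ otherwise. The key lemma is that $\pi_s$ and $\pi_s^\vee$ form a Galois connection on Bruhat order:
\[\pi_s(x)\leqb y \iff x\leqb \pi_s^\vee(y)\qquad\text{for all }x,y\in W.\]
Two properties come with it. First, $\pi_s$ and $\pi_s^\vee$ are order-preserving. Second, $\pi_s(x)\leqb x\leqb\pi_s^\vee(x)$.

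I would prove the lemma and its companion properties with the lifting property (Z-property) of Bruhat order. If $x\leqb y$ and $s$ is a descent of $y$ but not of $x$, then $x\leqb sy$ and $sx\leqb y$. The proof is a short case analysis on whether $s\in\Des(x)$ and whether $s\in\Des(y)$. This case analysis is the one place where genuine Coxeter combinatorics enters, so I expect it to be the main (though routine) obstacle. One must be careful that each of the four cases really reduces to a lifting statement.

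Composing, $\pi_u=\pi_{s_1}\cdots\pi_{s_k}$ has the upper adjoint $\pi_u^\vee=\pi_{s_k}^\vee\cdots\pi_{s_1}^\vee$. Hence $\pi_u$ is order-preserving, and $\pi_u(x)\leqb y\iff x\leqb\pi_u^\vee(y)$. This lets us treat general $u$ directly, with no induction on $\ell(u)$.

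For the second assertion, the inclusion $\nabla(\pi_u(Y))\subseteq\nabla(\pi_u(\nabla(Y)))$ is clear because $Y\subseteq\nabla(Y)$. Conversely, let $w\in\nabla(\pi_u(\nabla(Y)))$. Then $w\geqb\pi_u(z)$ for some $z\geqb y$ with $y\in Y$. Monotonicity gives $\pi_u(z)\geqb\pi_u(y)$, so $w\in\nabla(\pi_u(Y))$.

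For the first assertion, put $G=\nabla(\pi_u(F))$. This is an order filter, and $G\subseteq\U(\L(G))$ always holds. Take $w\in\U(\L(G))$; we must show $w\in G$. By adjunction, $w\in G$ iff $\pi_u(f)\leqb w$ for some $f\in F$, iff $f\leqb\pi_u^\vee(w)$ for some $f\in F$. Since $F$ is a filter, this is iff $\pi_u^\vee(w)\in F$. Because $F=\U(\L(F))$, it suffices to show $z\leqb\pi_u^\vee(w)$ for every $z\in\L(F)$.

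So fix $z\in\L(F)$. For each $f\in F$ we have $z\leqb f$, so monotonicity gives $\pi_u(z)\leqb\pi_u(f)$. Hence $\pi_u(z)$ lies below every element of $\pi_u(F)$, and thus below every element of $G$, i.e. $\pi_u(z)\in\L(G)$. Since $w\in\U(\L(G))$, we get $\pi_u(z)\leqb w$, and adjunction yields $z\leqb\pi_u^\vee(w)$, as required. This proves $G\in\Mac(W)$.
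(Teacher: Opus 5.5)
Your proof is correct, but it is organized differently from the paper's. The paper proves only a one-generator identity, $\U(\pi_s(X))=\nabla(\pi_s(\U(X)))$ (\cref{Lem2}). It applies this with $X=\L(F)$ to get closure of $\nabla(\pi_s(F))$, proves the absorption identity for a single $s$ by monotonicity, and then inducts on $\ell(u)$: it writes $u=sv$ and uses absorption to insert a $\nabla$ between $\pi_s$ and $\pi_v$.

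Your adjunction appears in the paper only implicitly. In \cref{Lem2}, the witness chosen for an element $y\in\U(\pi_s(X))$ is $y$ or $sy$ according to whether $s\in\Des(y)$, which is exactly $\pi_s^\vee(y)$. The lifting property is used there precisely to get $\pi_s(x)\leqb y\Rightarrow x\leqb\pi_s^\vee(y)$, and monotonicity of $\pi_s$ supplies the other direction. So the Coxeter-theoretic input is the same in both arguments (the lifting property); the difference is packaging.

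By naming $\pi_s^\vee$ and composing upper adjoints, you obtain $\nabla(\pi_u(F))=\{w\in W:\pi_u^\vee(w)\in F\}$ for every order filter $F$ and every $u$ at once. This buys several things:
\begin{itemize}
\item closure becomes a two-line check and the induction on $\ell(u)$ disappears;
\item you get the multi-generator identity $\nabla(\pi_u(\U(X)))=\U(\pi_u(X))$ directly, which the paper only obtains later, in the proof of \cref{lem:projections}, by iterating \cref{Lem2};
\item the action property $\tau_u\circ\tau_v(F)=\nabla(\pi_u\pi_v(F))$ is immediate.
\end{itemize}
The paper's route, in exchange, avoids introducing a second family of operators and stays entirely within the language of $\U$, $\L$, and $\nabla$.
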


The importance of \cref{thm:descents} is that it allows us to make the following definition.\footnote{The author thanks Zach Hamaker for originally suggesting this definition, which he formulated together with Vic Reiner.} 

\begin{definition}\label{def:main} 
Let $W$ be a Coxeter group equipped with the Bruhat order. For each $u\in W$, define $\tau_u\colon\Mac(W)\to\Mac(W)$ by 
\[\tau_u(F)=\nabla(\pi_u(F)).\] Let the \dfn{(left) weak order} on $\Mac(W)$ be the partial order $\leqr$ defined so that $F\leqr F'$ if there exists $u\in W$ such that $F=\tau_{u}(F')$. Define the \dfn{descent set} of an order filter $F\in\Mac(W)$ to be the set 
\[\Des(F)=\{s\in S:\tau_s(F)\neq F\}.\] 
\end{definition} 

It follows from \cref{thm:descents} that the set $\{\tau_u:u\in W\}$ is a monoid isomorphic to the $0$-Hecke monoid of type~$W$. Thus, the operators $\tau_u$ define an action of the $0$-Hecke monoid of $W$ on $\Mac(W)$. 

We will prove that the poset embedding $\iota\colon W\to\Mac(W)$ satisfies $\iota(\pi_u(w))=\tau_u(\iota(w))$ for all $u,w\in W$. This ensures that the weak order and descent set statistic on $\Mac(W)$ introduced in \cref{def:main} extend the usual weak order and descent set statistics on $W$. 

\subsection{The ASM Weak Order}\label{subsec:1.3} 

An $n\times n$ \dfn{alternating sign matrix (ASM)} is an $n\times n$ matrix with entries in $\{-1,0,1\}$ such that the nonzero entries within each row or column alternate in sign and sum to $1$. Let $\ASM_n$ be the set of $n\times n$ alternating sign matrices. For $A=(A_{a,b})_{a,b\in[n]}\in\ASM_n$ and $i,j\in[n]$, define 
\[\rk_A(i,j)=\sum_{a\leq i}\sum_{b\leq j}A_{a,b}.\]
We write $A\leqb A'$ if and only if $\rk_A(i,j)\geq\rk_{A'}(i,j)$ for all $i,j\in[n]$. The partial order $\leqb$ is the \dfn{Bruhat order} on $\ASM_n$. 

We can view the symmetric group $S_n$ as a subset of $\ASM_n$ by identifying each permutation with its permutation matrix. The restriction of the Bruhat order on $\ASM_n$ to $S_n$ is the usual Bruhat order on $S_n$. It is well known that the map $\Psi\colon\ASM_n\to\Mac(S_n)$ given by \[\Psi(A)=\{w\in S_n:A\leqb w\}\] is a poset isomorphism.  

There is a classical bijection between $\ASM_n$ and a set $\mathrm{MT}_n$ of combinatorial objects known as \emph{monotone triangles of order $n$}. 
Hamaker and Reiner \cite{HR} defined an action of the $0$-Hecke monoid of $S_n$ on $\mathrm{MT}_n$, which then allowed them to define a weak order and a descent set statistic on $\mathrm{MT}_n$. By translating from monotone triangles to alternating sign matrices, this produces a $0$-Hecke action, a weak order, and a descent set statistic on $\ASM_n$. It is straightforward to verify that their definitions agree with those in \cref{def:main} when $W=S_n$.\footnote{Technically, one must take inverses because Hamaker and Reiner use the right weak order while we use the left weak order.} The weak order that Hamaker and Reiner defined on $\ASM_n$, which we also denote by $\leqr$, is now known as the \dfn{ASM weak order}. 

Given a set $J$ of simple reflections of $S_n$, we let $w_\circ(J)$ denote the long element of the parabolic subgroup generated by $J$. For $A,A'\in\ASM_n$ with $A'\leqr A$, let $[A',A]_{\mathsf{L}}$ be the interval between $A'$ and $A$ in the ASM weak order. Let ${\Delta_{\mathsf{L}}(A',A)}$ denote the order complex of the open interval ${[A',A]_{\mathsf L}\setminus\{A',A\}}$. Hamaker and Reiner conjectured that $\Delta_{\mathsf{L}}(A',A)$ is contractible unless ${A'=\tau_{w_\circ(J)}(A)}$ for some $J\subseteq\Des(A)$, in which case the order complex is homotopy equivalent to a $(|J|-2)$-dimensional sphere. In \cref{fig:counterexample}, we exhibit a counterexample to this conjecture, which we discovered using ChatGPT~5.4 Pro. The figure shows an interval $[A',A]_{\mathsf{L}}$ in the ASM weak order on $\ASM_6$ such that $\Delta_{\mathsf{L}}(A',A)$ is not contractible even though $A'$ is not of the form $\tau_{w_\circ(J)}(A)$. To see that $\Delta_{\mathsf{L}}(A',A)$ is not contractible, note that $\mu_{\mathsf{L}}(A',A)=1\neq 0$, where $\mu_{\mathsf{L}}$ is the M\"obius function of the ASM weak order. 

\begin{figure}[ht]
\begin{center}\includegraphics[width=\linewidth]{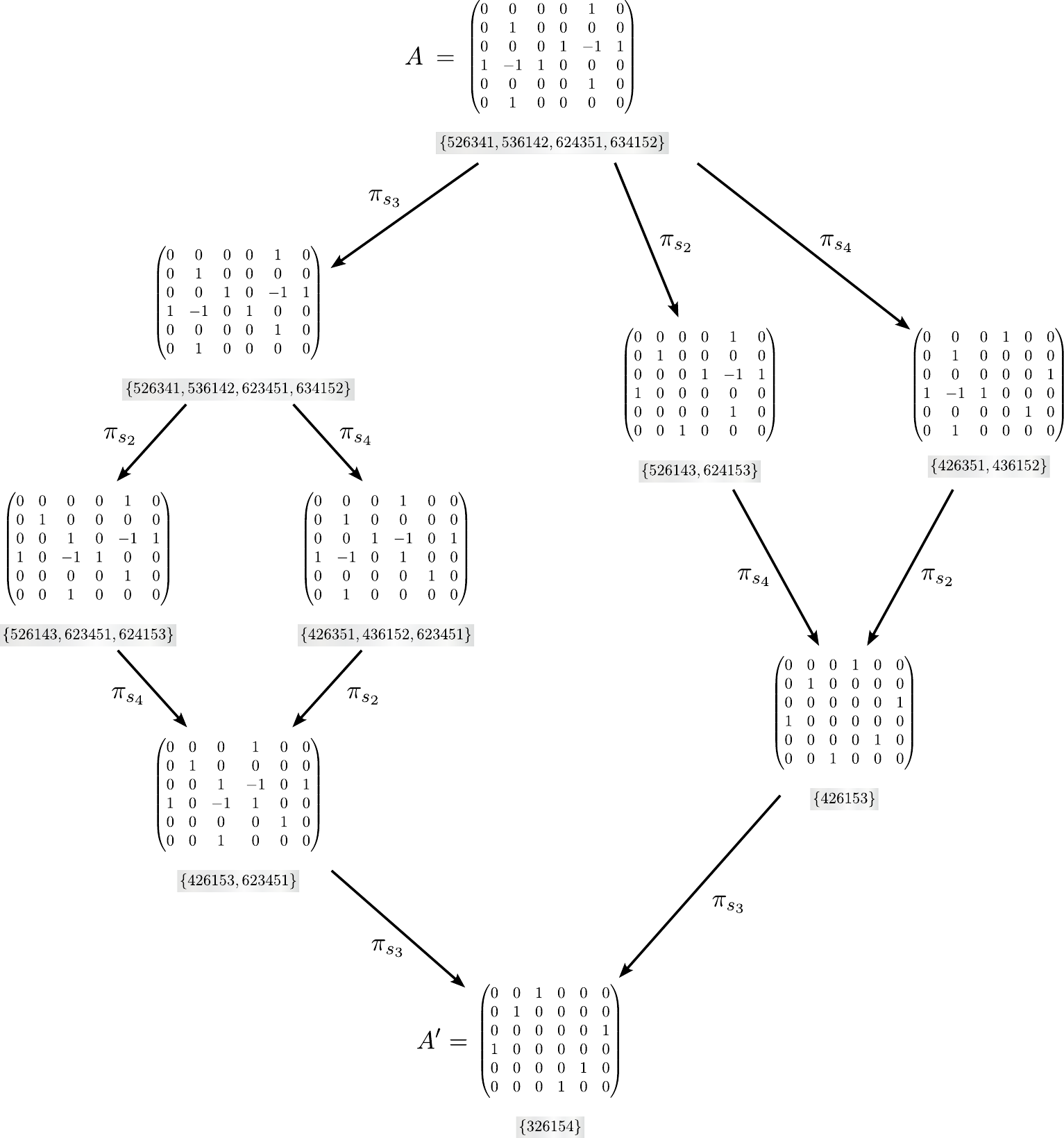}
  \end{center}
\caption{An interval $[A',A]_{\mathsf{L}}$ in the ASM weak order on $\ASM_6$ such that the order complex $\Delta_{\mathsf{L}}(A',A)$ is not contractible. Below each alternating sign matrix $B$ is the set $\min(\Psi(B))\subseteq S_6$.    }\label{fig:counterexample}
\end{figure}

\subsection{ASM Varieties} 
Fix a field $\mathbb K$, and let $\Mat_n$ be the set of $n\times n$ matrices with entries in $\mathbb K$. Associated to each alternating sign matrix $A\in\ASM_n$ is the \dfn{ASM variety} 
\[X_A=\{M\in\Mat_n:\mathrm{rank}(M_{[i],[j]})\leq\rk_A(i,j)\text{ for all }i,j\in[n]\},\] where $M_{[i],[j]}$ denotes the restriction of $M$ to its first $i$ rows and its first $j$ columns. 

Let $Z=(z_{i,j})_{i,j\in[n]}$ be an $n\times n$ matrix of variables. Let $\RR_n=\mathbb K[z_{i,j}:i,j\in[n]]$ be the polynomial ring in these variables. Let $I_k(Z_{[i],[j]})$ be the ideal of $\RR_n$ generated by the $k$-minors of the restricted matrix $Z_{[i],[j]}$. For $A\in\ASM_n$, we define the \dfn{ASM ideal} 
\[I_A=\sum_{i,j\in[n]}I_{\rk_{A}(i,j)+1}(Z_{[i],[j]}).\] Then $X_A$ is the vanishing locus of $I_A$. 

When $A\in S_n$, the ASM variety $X_A$ is called a \dfn{matrix Schubert variety}, and $I_A$ is called a \dfn{Schubert determinantal ideal}. Fulton introduced matrix Schubert varieties \cite{Fulton} and proved that they are irreducible and Cohen--Macaulay. He also proved that for $A\in S_n$, the codimension of $X_A$ is $\ell(A)$. 
One of the primary motivations for studying ASM varieties comes from a result of Weigandt, which states that they are exactly the intersections of matrix Schubert varieties \cite{Wei17}.  

Cohen--Macaulayness is a central homological regularity condition in commutative algebra and algebraic geometry.  It is weak enough to hold for many singular spaces, but strong enough to rule out several pathologies: Cohen--Macaulay varieties with standard graded coordinate rings are equidimensional and have well-behaved local cohomology and duality theory. One of our main results is as follows.  

\begin{theorem}\label{thm:CM}
Let $A\in\ASM_n$. The following are equivalent: 
\begin{enumerate}[(i)]
\item\label{(i)} The ASM weak order interval $[e,A]_{\mathsf{L}}$ is graded. 
\item\label{(ii')} For every $B\in[e, A]_{\mathsf{L}}$, the permutations in $\min(\Psi(B))$ all have the same Coxeter length. 
\item\label{(ii)} For every $B\in[e, A]_{\mathsf{L}}$, the ASM variety $X_B$ is equidimensional. 
\item\label{(iii)} For every $B\in[e, A]_{\mathsf{L}}$, the ASM variety $X_B$ is Cohen--Macaulay.  
\end{enumerate} 
\end{theorem}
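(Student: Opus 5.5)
We plan to go around the cycle (iii)$\Rightarrow$(ii)$\Rightarrow$(ii')$\Rightarrow$(i)$\Rightarrow$(iii), using known structural facts about ASM varieties together with the $0$-Hecke action $\tau$ from \cref{thm:descents}.

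\textbf{The easy implications.} (iii)$\Rightarrow$(ii) is standard: $\RR_n/I_B$ is standard graded, and Cohen--Macaulay standard graded rings are equidimensional.

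For (ii)$\Leftrightarrow$(ii'), we use Weigandt's description $X_B=\bigcup_{w\in\min(\Psi(B))}X_w$. This comes from $I_B=\bigcap_{w}I_w$ over the Bruhat-minimal permutations above $B$, and it is an irredundant decomposition into irreducible matrix Schubert varieties. Since $\operatorname{codim}X_w=\ell(w)$ by Fulton, equidimensionality of $X_B$ is exactly the condition that all elements of $\min(\Psi(B))$ have the same length.

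\textbf{(ii')$\Rightarrow$(i).} Define $\lambda(B)$ to be the common length of the elements of $\min(\Psi(B))$. We would show that every cover $B'\lessdot B$ in $[e,A]_{\mathsf L}$ satisfies $\lambda(B')=\lambda(B)-1$.
\begin{itemize}
\item First, every cover in weak order is of the form $B'=\tau_s(B)\neq B$ with $s\in\Des(B)$. This should follow because $\tau_u$ factors through $\tau_s$'s along a reduced word, together with the monoid property from \cref{thm:descents}.
\item Next, we have $\min\Psi(\tau_s(B))\subseteq \pi_s(\min\Psi(B))$ by \cref{thm:descents}.
\item Each $\pi_s$ lowers length by at most one. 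If some minimal element $w$ of $B$ has $s\notin\Des(w)$, then $w$ survives into $\tau_s(B)$ unchanged.
\item By equidimensionality of $\tau_s(B)$, either all surviving minima have length $\lambda(B)$ or all have length $\lambda(B)-1$. The first case must force $\tau_s(B)=B$.
\end{itemize}
Hence $\lambda$ is a rank function on the interval, with $\lambda(e)=0$.

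\textbf{(i)$\Rightarrow$(iii), the main obstacle.} This is where the vertex-decomposability result announced in the abstract enters. The plan has three steps.
\begin{itemize}
\item \emph{Reduction to a complex.} Via Knutson--Miller, the antidiagonal initial ideal of $I_B$ is the Stanley--Reisner ideal of the union $\Delta_B=\bigcup_{w\in\min\Psi(B)}\Delta(Q,w)$ of subword complexes, with $Q$ a fixed square word for $w_\circ$. It therefore suffices to show that $\Delta_B$ is vertex-decomposable, hence shellable and Cohen--Macaulay, and then lift to $I_B$ by Gr\"obner degeneration.
\item \emph{Induction.} We induct on the rank in the graded interval. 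Take the first letter $s$ of $Q$, which in the left-action convention we arrange to be a descent of $B$ when $B\neq e$. Then:
\begin{itemize}
\item the link of the first vertex should be $\Delta_{B}$ for the shorter word $Q'$;
\item the deletion should be $\Delta_{\tau_s(B)}$ for $Q'$, mirroring the Knutson--Miller recursion $\Delta(Q,w)$ with link $\Delta(Q',w)$ and deletion $\Delta(Q',\pi_s w)$ when $s\in\Des(w)$.
\end{itemize}
\item \emph{Purity.} \cref{thm:descents} guarantees that $\pi_s$ applied to all of $\Psi(B)$ produces exactly the filter $\tau_s(B)$, so the union behaves well. Gradedness (equivalently (ii') for all $B$ in the interval) supplies the purity and dimension matching needed for vertex decomposition: link and deletion must be pure of the right dimensions.
\end{itemize}

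The delicate point will be verifying that the deletion and link are precisely these unions. Minimal elements $w$ lacking $s$ as a descent contribute differently to the link and the deletion, and handling them is where the gradedness hypothesis is essential. I expect this bookkeeping to be the hardest step.
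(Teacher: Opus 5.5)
Your outline for the hard direction has the right ingredients, and your (ii')$\Rightarrow$(i) argument is sound. Write $F=\Psi(B)$ and $\Q=s\Q'$ on index set $V=\{k_1<\cdots<k_t\}$, with $V'=V\setminus\{k_1\}$. The plan is then: link $\Delta(\Q',F)$ and deletion $\Delta(\Q',\tau_s(F))$ at $k_1$, followed by Knutson--Miller and Gr\"obner degeneration. However, the step that carries the proof is never actually done, and the induction as you set it up cannot work.

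The first letter of $\Q$ is dictated by the fixed Knutson--Miller word and its suffixes, so you cannot ``arrange'' it to be a descent of $B$. Also, induction on rank never reaches the link, which involves the same $B$. You must induct on $|\Q|$, for all words and all $B\in[e,A]_{\mathsf L}$ simultaneously, and split into two cases.

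\emph{Case $s\notin\Des(F)$.} Then $\tau_s(F)=F$, so the deletion equals the link, and $k_1$ is \emph{not} a shedding vertex (every facet of the deletion extends by $k_1$). Instead, $\Delta(\Q,F)$ is a cone over $\Delta(\Q',F)$, and a cone over a vertex-decomposable complex is vertex-decomposable.

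\emph{Case $s\in\Des(F)$.} Your own argument shows that the minimal elements of $\tau_s(F)$ have length $\lambda(B)-1$. Let $D$ be a facet of the deletion. Then $\Q'|_{V'\setminus D}$ is a reduced word $\mathsf w$ for some $w\in\min(\tau_s(F))\subseteq\pi_s(\min(F))$. This forces $w=sw'$ with $w'\in\min(F)$, so $\Q|_{V\setminus D}=s\mathsf w$ is a reduced word for $w'$, and $D$ is a facet of $\Delta(\Q,F)$.

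This shedding condition, together with purity of $\Delta(\Q,F)$ (needed for ``vertex-decomposable $\Rightarrow$ Cohen--Macaulay''), is the only place the hypothesis enters.

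Conversely, the deletion identity, which you expect to be delicate and to require gradedness, holds for every $F\in\Mac(W)$. For a single $w$ one has $\del_{k_1}(\Delta(\Q,w))=\Delta(\Q',\pi_s(w))$:
\begin{itemize}
\item If a reduced word for $w$ inside $s\Q'|_{V'\setminus D}$ uses the initial $s$, then $s\in\Des(w)$, and dropping that $s$ leaves a reduced word for $sw$.
\item Otherwise $\Q'|_{V'\setminus D}$ contains a reduced word for $w$, hence one for $\pi_s(w)\leqb w$.
\item Conversely, prepend $s$ when $s\in\Des(w)$.
\end{itemize}
Taking the union over $w\in\min(F)$ gives $\Delta(\Q',\tau_s(F))$, because $\min(\tau_s(F))\subseteq\pi_s(\min(F))\subseteq\tau_s(F)$ and $v\mapsto\Delta(\Q',v)$ reverses Bruhat order.

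Two further gaps remain. First, your cycle never proves (i)$\Rightarrow$(ii'), yet your final step invokes ``gradedness (equivalently (ii'))''. You must either prove this or cite it from \cite{EKW}, as the paper does; the paper proves only (ii')$\Rightarrow$(iii) and cites the equivalence of (i), (ii'), and (ii). Second, the equality $\operatorname{in}_\preceq\bigl(\bigcap_{w}I_w\bigr)=\bigcap_{w}\operatorname{in}_\preceq(I_w)$ is not supplied by Knutson--Miller, since in general only ``$\subseteq$'' holds. The paper cites \cite[Proposition~3.8]{AFHKKL} for the equality.
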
 

Escobar, Klein, and Weigandt proved the equivalence of \eqref{(i)}, \eqref{(ii')}, and \eqref{(ii)} in \cref{thm:CM}, and they conjectured the equivalence of these conditions to \eqref{(iii)} \cite[Conjecture~3.21]{EKW}. In general, it is still open to fully characterize Cohen--Macaulay ASM varieties. Axelrod--Freed, Hao, Kendall, Klein, and Luo made some additional progress toward this goal \cite{AFHKKL}. 

Our proof of \cref{thm:CM} was obtained autonomously by ChatGPT~5.4 Pro. 

\subsection{Subword Complexes}

Let $(W,S)$ be a Coxeter system. Given a finite word $\Q$ over the alphabet $S$ and an element $w\in W$, we consider the subword complex $\Delta(\Q,w)$, the abstract simplicial complex whose faces are the complements of the index sets of the subwords of $\Q$ that contain reduced words for $w$. Knutson and Miller introduced subword complexes \cite{KM04,KM05}, which have been studied extensively ever since \cite{BC17,EM,PS15,STW25}. For $F\in\Mac(W)$, we consider the simplicial complex 
\[\Delta(\Q,F)=\bigcup_{w\in\min(F)}\Delta(\Q,w),\] where $\min(F)$ is the set of minimal elements of $F$ (in Bruhat order).   

One of the main intermediary steps needed for the proof of \cref{thm:CM} is as follows. 

\begin{theorem}\label{thm:vertex}
Let $F\in\Mac(W)$. Assume that for every $F'\in\Mac(W)$ with $F'\leqr F$, the minimal elements of $F'$ all have the same Coxeter length. Then $\Delta(\Q,F)$ is vertex-decomposable for every finite word $\Q$ over $S$. 
\end{theorem}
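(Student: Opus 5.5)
Our plan is to prove \cref{thm:vertex} by induction on the length of $\Q$, following the Knutson--Miller proof that a single subword complex $\Delta(\Q,w)$ is vertex-decomposable. We will decompose with respect to the vertex given by the \emph{first} letter of $\Q$. Write $\Q=s\Q'$. The natural expectation is
\[
\lk(1)=\Delta(\Q',F),\qquad \del(1)=\Delta(\Q',\tau_s(F)),
\]
possibly with the roles interchanged or shifted depending on whether $s\in\Des(F)$. The hypothesis of the theorem is inherited by $\tau_s(F)$, since $\tau_s(F)\leqr F$ and $\leqr$ is transitive by \cref{thm:descents}. So both complexes satisfy the inductive hypothesis. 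The base case is $\Q$ empty: then $\Delta(\Q,F)$ is either the void complex or $\{\emptyset\}$, according to whether $e\in F$.

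\textbf{Link and deletion for one element.} For a single $w$, a face avoiding index $1$ (with $1$ in the complement subword) contains a reduced word for $w$ in $s\Q'$. Such a subword either uses the letter $s$, so that $\Q'$ contains a word with Demazure product $\geq sw$ (up to Demazure/0-Hecke bookkeeping), or it does not, so that $\Q'$ already contains $w$.

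\textbf{Link and deletion for $F$.} I would rephrase faces via Demazure products: a set $\sigma$ is a face of $\Delta(\Q,F)$ exactly when the Demazure product $\delta(\Q\setminus\sigma)$ lies in $F$. Here we use that $F$ is an up-set and that subwords containing $w$ are exactly those with Demazure product $\geq_B w$. Then:
\begin{itemize}
\item Faces not containing $1$ correspond to $\delta(s\,\Q'\setminus\sigma)\in F$. The $0$-Hecke relation gives $\delta(s\mathsf{R})=s\star\delta(\mathsf{R})$, and $s\star x\in F$ iff $x\in \tau'_s(F)$ for an appropriate up-operator. I would express this through $\pi_s$ and \cref{thm:descents}, using the identity $\nabla(\pi_u(\nabla(Y)))=\nabla(\pi_u(Y))$ to show that the resulting set is again in $\Mac(W)$ and equals $\tau_s(F)$ or $F$.
\item Faces containing $1$ correspond to $\delta(\Q'\setminus\sigma')\in F$, giving $\Delta(\Q',F)$.
\end{itemize}
Thus $\lk(1)$ and $\del(1)$ are $\Delta(\Q',F)$ and $\Delta(\Q',\tau_s(F))$ in some order. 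When $s\notin\Des(F)$, the two complexes coincide and $1$ is a cone point, so we pass directly to the induction.

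\textbf{Pureness and shedding.} Vertex-decomposability requires, beyond the link and deletion being vertex-decomposable, that $\Delta(\Q,F)$ is pure and that no facet of the link is a facet of the deletion, i.e.\ that the deletion is pure of the same dimension. Each facet of $\Delta(\Q,w)$ has size $|\Q|-\ell(w)$ when nonempty. Hence the union over $w\in\min(F)$ is pure exactly because all minimal elements of $F$ have the same length; this is where the hypothesis is used for $F$ itself. Similarly, $\tau_s(F)$ has all minimal elements of one length.

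It remains to show that, when $s\in\Des(F)$, this common length is one less than for $F$. Every minimal element of $\tau_s(F)$ has the form $\pi_s(m)$ for some $m\in\min(F)$. Such an element has length $\ell(m)-1$ if $s\in\Des(m)$ and $\ell(m)$ otherwise. Equal lengths, together with $\tau_s(F)\neq F$, force the lengths to drop by one uniformly. This gives the dimension match. There is also a degenerate case in which $\Delta(\Q',F)$ is void or a $(-1)$-dimensional complex; this is handled by the convention allowing void complexes.

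\textbf{Main obstacle.} I expect the hardest step to be the identification of the deletion with $\Delta(\Q',\tau_s(F))$, i.e.\ showing that $\{x: s\star x\in F\}$ is exactly $\tau_s(F)$, rather than merely containing or being contained in it. This is where the MacNeille condition $F=\U(\L(F))$ and \cref{thm:descents} must enter. I would first check it when $F=\iota(w)$, where it is the classical Knutson--Miller statement. I would then try to reduce the general case using $\nabla(\pi_s(\nabla(Y)))=\nabla(\pi_s(Y))$ applied to $Y=\min(F)$.
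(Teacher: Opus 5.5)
Your outline matches the paper's proof of \cref{thm:vertex}: induct on the length of $\Q$ and split at the first letter $s$. You take $\lk_{k_1}(\Delta(\Q,F))=\Delta(\Q',F)$ and $\del_{k_1}(\Delta(\Q,F))=\Delta(\Q',\tau_s(F))$, and use the cone argument when $s\notin\Des(F)$. When $s\in\Des(F)$, you combine $\min(\tau_s(F))\subseteq\{\pi_s(w):w\in\min(F)\}$ with the equal-length hypothesis to see that lengths drop by exactly one, so every facet of the deletion has full size and is a facet of $\Delta(\Q,F)$.

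The gap is the one step you leave as a plan, namely $\{x\in W: s\star x\in F\}=\tau_s(F)$. This is exactly the content of the paper's key lemma (\cref{lem:deletion}), and without it the deletion is never actually identified. That is why you hedge with ``in some order'' and ``equals $\tau_s(F)$ or $F$''. In fact the set is always $\tau_s(F)$. It coincides with $F$, and hence the deletion coincides with the link, exactly when $s\notin\Des(F)$.

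The step is easy, and the MacNeille condition plays no role in it: the identity holds for every order filter $F$. Use three facts: $x\mapsto s\star x$ is order-preserving (by the lifting property), $\pi_s(s\star x)=\pi_s(x)\leqb x$, and $s\star\pi_s(y)=s\star y$.
\begin{itemize}
\item If $s\star x\in F$, then $\pi_s(s\star x)\leqb x$ shows $x\in\nabla(\pi_s(F))$.
\item Conversely, if $\pi_s(y)\leqb x$ with $y\in F$, then $y\leqb s\star y=s\star\pi_s(y)\leqb s\star x$, so $s\star x\in F$.
\end{itemize}
Your suggested reduction also works. Write $\{x:s\star x\in F\}=\bigcup_{w\in\min(F)}\{x:w\leqb s\star x\}$. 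The principal case identifies each piece as $\nabla(\pi_s(w))$, so the union is $\nabla(\pi_s(\min(F)))$. Then \cref{thm:descents} with $Y=\min(F)$ turns this into $\nabla(\pi_s(F))=\tau_s(F)$.

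The MacNeille property and \cref{thm:descents} are needed elsewhere: to know $\tau_s(F)\in\Mac(W)$, and to pass the hypothesis to $\tau_s(F)$ via transitivity of $\leqr$. With this filled in, your Demazure-product version is a valid and clean repackaging of the paper's argument. It relies on the Knutson--Miller fact that a word contains a reduced word for $w$ if and only if $w\leqb$ its Demazure product. The paper instead proves \cref{lem:deletion} directly, by inserting or removing the initial $s$ in reduced subwords.
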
 

\subsection{MacNeille Pop-Stack Operators} 

Let us now assume that the Coxeter system $(W,S)$ is finite and irreducible. Let $w_\circ$ denote the long element of $W$. For $J\subseteq S$, let $W_J$ be the parabolic subgroup generated by $J$, and let $w_\circ(J)$ be the long element of $W_J$. 

\begin{definition}\label{def:pop} 
Let $(W,S)$ be a finite irreducible Coxeter system. Define the \dfn{MacNeille pop-stack operator} $\MacPop\colon \Mac(W)\to\Mac(W)$ by 
\[\MacPop(F)=\tau_{w_\circ(\Des(F))}(F).\] 
\end{definition}  

In a previous article \cite{DefantCoxeterPop}, the author introduced the \dfn{pop-stack operator} of $W$, which is the map $\Pop\colon W\to W$ defined by $\Pop(x)=\pi_{w_\circ(\Des(x))}(x)$. When $W=S_n$, this is the classical pop-stack sorting map from enumerative combinatorics and computer science \cite{CG,CGP,Elder}. We will show that \begin{equation}\label{eq:embedding2}
\iota(\Pop(x))=\MacPop(\iota(x))
\end{equation} for every $x\in W$, so the MacNeille pop-stack operator extends the pop-stack operator.\footnote{The author has introduced a pop-stack operator on any finite lattice \cite{DefantMeeting}. This definition agrees with the Coxeter-theoretic definition from \cite{DefantCoxeterPop} when the lattice is the weak order on $W$. Since $\Mac(W)$ is also a lattice, we can consider the lattice-theoretic pop-stack operator on $\Mac(W)$. This operator, however, is generally \emph{not} the same as the MacNeille pop-stack operator.}

We view $\MacPop\colon \Mac(W)\to\Mac(W)$ as a noninvertible dynamical system. If we start at an arbitrary element $F\in\Mac(W)$ and then repeatedly apply $\MacPop$, we will eventually reach the order filter $W=\iota(e)$, which is the minimum element of $\Mac(W)$. Thus, it is natural to define 
\[g_{\MacPop}(F)=\min\{t\in\mathbb Z_{\geq 0}:\MacPop^t(F)=W\}.\] We are interested in the quantity 
\[\G_{\Mac(W)}=\max_{F\in\Mac(W)}g_{\MacPop}(F).\] Similarly, for $x\in W$, we can define 
\[g_{\Pop}(x)=\min\{t\in\mathbb Z_{\geq 0}:\Pop^t(x)=e\}\] and consider the quantity 
\[\G_W=\max_{x\in W}g_{\Pop}(x).\] 
The author proved that $\G_W=h-1$, where $h$ is the \emph{Coxeter number} of $W$  \cite[Theorem~1.3]{DefantCoxeterPop}. 
It follows from \eqref{eq:embedding2} that $g_{\Pop}(x)=g_{\MacPop}(\iota(x))$ for every $x\in W$. Thus, 
\begin{equation}\label{eq:h}
h-1=\G_W\leq \G_{\Mac(W)}.
\end{equation}  

The following result shows that the maximum orbit lengths of the MacNeille pop-stack operator can be obtained by starting at an element of $\iota(W)$. 

\begin{theorem}\label{thm:pop}
Let $(W,S)$ be a finite irreducible Coxeter system with Coxeter number $h$. We have 
\[\G_{\Mac(W)}=h-1.\]
\end{theorem}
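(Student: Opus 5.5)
The lower bound $h-1\leq\G_{\Mac(W)}$ is already recorded in \eqref{eq:h}, so only $\G_{\Mac(W)}\leq h-1$ needs proof. We have $\MacPop^t(F)=W$ exactly when $e\in\MacPop^t(F)$. The plan is therefore to follow, for each $F$, a single Bruhat-minimal element through the iteration and compare it with the ordinary pop-stack operator on $W$, for which $\G_W=h-1$ by \cite[Theorem~1.3]{DefantCoxeterPop}.

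\textbf{Step 1 (descents of minimal elements).} I would first show that $\Des(x)\subseteq\Des(F)$ for every $x\in\min(F)$. Suppose $s\in\Des(x)$ but $s\notin\Des(F)$, so $F=\tau_s(F)=\nabla(\pi_s(F))$. Since $x\in F$, there is $y\in F$ with $\pi_s(y)\leqb x$. Because $\pi_s(y)\in\tau_s(F)=F$ and $x$ is minimal in $F$, we get $x=\pi_s(y)$. But $s$ is never a descent of an element in the image of $\pi_s$, which is a contradiction.

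\textbf{Step 2 (one-step comparison).} Recall that $\pi_{w_\circ(J)}(x)$ is the minimal element of the coset $W_Jx$. If $J\supseteq J'$, then the minimal element of $W_Jx$ is Bruhat-below the minimal element of $W_{J'}x\subseteq W_Jx$. Take $J=\Des(F)$ and $J'=\Des(x)$, which is allowed by Step 1. This gives
\[
\pi_{w_\circ(\Des(F))}(x)\leqb\Pop(x).
\]
The left-hand side lies in $\pi_{w_\circ(\Des(F))}(F)$, so $\Pop(x)\in\MacPop(F)$ for every $x\in\min(F)$. In words, one application of $\MacPop$ does at least as much as $\Pop$ applied to a minimal element.

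\textbf{Step 3 (iteration, the main obstacle).} To iterate, I would pick $x_1\in\min(\MacPop(F))$ with $x_1\leqb\Pop(x_0)$, where $x_0\in\min(F)$, and then repeat Steps 1 and 2. The difficulty is that $\Pop$ is not monotone for Bruhat order, so $x_1\leqb\Pop(x_0)$ does not by itself give $g_{\Pop}(x_1)\leq g_{\Pop}(x_0)-1$. I would try first to extract from the proof of $\G_W=h-1$ in \cite{DefantCoxeterPop} an explicit statistic, or a family of elements $\beta_t\in W$, with the following three properties.
\begin{itemize}
\item $\Pop^t(x)\leqb\beta_t$ for every $x\in W$ (or a suitable refinement depending on $x$).
\item The condition ``$x\leqb\beta_t$'' is Bruhat-downward closed.
\item The condition is carried one step forward by $x\mapsto\pi_{w_\circ(J)}(x)$ for any $J\supseteq\Des(x)$.
\end{itemize}
The natural candidate for such a family comes from lower bounds for projections of $w_\circ$ along a bipartite Coxeter element. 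By Step 2, the third property only needs to be checked for the coset-minimum operation. Downward closure then lets the bound pass from $\Pop(x_{t})$ to the minimal element $x_{t+1}$ at each stage. If this works, we obtain $e\in\MacPop^{h-1}(F)$, and hence $g_{\MacPop}(F)\leq h-1$ for every $F\in\Mac(W)$.
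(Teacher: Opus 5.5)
Your Steps 1 and 2 are correct and match ingredients of the paper's argument: the containment $\Des(a)\subseteq\Des(F)$ for $a\in\min(F)$, and the comparison of $\pi_{w_\circ(\Des(F))}(a)$ with $\Pop(a)$. But Step 3 is the entire content of the upper bound, and it is not carried out. Moreover, the mechanism you describe cannot work as stated.

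As written, your plan tracks one element under principal bounds $x\leqb\beta_t$, so the conclusion needs $\beta_{h-1}=e$. No such chain exists already for $W=S_3$, where $h-1=2$. Since $\Pop(s_1s_2)=s_2$ and $\Pop(s_2s_1)=s_1$, your first property forces $s_1,s_2\leqb\beta_1$, so some $x$ of length $2$ satisfies $x\leqb\beta_1$. Then $\Pop(x)$ is a simple reflection, and your third property with $J=\Des(x)$ forces $\Pop(x)\leqb\beta_2$, so $\beta_2\neq e$. The hedge ``or a suitable refinement depending on $x$'' is exactly where the proof lives, and the suggestion about bipartite Coxeter elements is not developed into anything checkable.

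The missing idea is to work one maximal parabolic quotient at a time. For $s\in S$ and $J=S\setminus\{s\}$, the quotient $W^J$ has a maximum $w_\circ^J$, and the correct bound is $x^J\leqb\Pop^t(w_\circ^J)$. Equivalently, $x\leqb\Pop^t(w_\circ^J)\,w_\circ(J)$, so your framework survives with $|S|$ chains, but they terminate at $w_\circ(J)$ rather than at $e$.

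Propagating this bound is the nontrivial part, and it needs two inputs you do not mention.
\begin{enumerate}[(1)]
\item The left descents of $\Pop^{t}(w_\circ^J)$ commute pairwise. This comes from \cite[Proposition~3.4]{DefantCoxeterPop} when $h$ is even, and from a direct check in types $A_{h-1}$ and $I_2(h)$ when $h$ is odd. It is the hypothesis under which the left-handed version of \cite[Lemma~3.5]{DefantCoxeterPop} upgrades $a^J\leqb\Pop^{t-1}(w_\circ^J)$ to $\Pop(a^J)\leqb\Pop^t(w_\circ^J)$, despite $\Pop$ failing to be Bruhat-monotone.
\item \cite[Lemma~3.6]{DefantCoxeterPop} gives $(\Pop(a))^J\leqr\Pop(a^J)$. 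Combined with your Step 2 and Bruhat monotonicity of $w\mapsto w^J$, this gives $b^J\leqb\Pop^t(w_\circ^J)$ for a minimal $b\leqb\pi_{w_\circ(\Des(F))}(a)$.
\end{enumerate}

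Since $\Pop^{h-1}(w_\circ^J)=e$ by \cite[Theorem~1.3]{DefantCoxeterPop}, at $t=h-1$ you get elements of $\MacPop^{h-1}(F)$ lying in each $W_{S\setminus\{s\}}$. These parabolic subgroups are Bruhat order ideals with $\bigcap_{s}W_{S\setminus\{s\}}=\{e\}$. The paper then concludes via $\MacPop^{h-1}(F)=\U(\L(\MacPop^{h-1}(F)))=\U(\{e\})=W$. Alternatively, since the propagation is uniform in $s$, you can track one element that satisfies all $|S|$ bounds at once, and that element must equal $e$.
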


\subsection{MacNeille Complexes}
Assume that $(W,S)$ is finite. For $J\subseteq S$, let $w_\circ(J)$ be the long element of the parabolic subgroup generated by $J$, and define
\[p_J(F)=\tau_{w_\circ(J)}(F)\]
for $F\in\Mac(W)$. Following a suggestion of Hamaker and Reiner, one can use these parabolic projections to glue together simplices indexed by the elements of $\Mac(W)$. The resulting Boolean cell complex, which we call the \dfn{MacNeille complex} of $(W,S)$, has facets naturally indexed by $\Mac(W)$. When $W=S_n$, it agrees with the complex that Hamaker and Reiner constructed from partial monotone triangles. A Boolean cell complex need not be an abstract simplicial complex because distinct faces can have the same set of vertices. Our next result shows that this pathology never occurs for MacNeille complexes.

\begin{theorem}\label{thm:complex}
Let $(W,S)$ be a finite Coxeter system. The MacNeille complex of $(W,S)$ is a pure $(|S|-1)$-dimensional abstract simplicial complex. Moreover, every linear extension of the weak order on $\Mac(W)$ is a shelling order for its facets.
\end{theorem}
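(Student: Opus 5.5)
The plan is to make the gluing explicit, as in the Coxeter complex, and then argue in three steps: purity, simpliciality, and the shelling.

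\textbf{Setup.} The faces of the MacNeille complex are the pairs $(J,G)$ with $J\subseteq S$ and $G=p_J(F)$ for some $F\in\Mac(W)$. The face $(J,G)$ lies in the facet $F$ exactly when $p_J(F)=G$. Two standard consequences of \cref{thm:descents} will be used throughout:
\begin{itemize}
\item $\tau_{w_\circ(J)}\tau_{w_\circ(J')}=\tau_{w_\circ(J')}$ whenever $J\subseteq J'$, so $p_{J'}\circ p_J=p_{J'}$;
\item $\tau_{w_\circ(J)}\tau_s=\tau_{w_\circ(J)}$ for every $s\in J$.
\end{itemize}
Thus the faces of the facet $F$ are $\{(J,p_J(F)):J\subseteq S\}$, a Boolean lattice of rank $|S|$. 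Its vertices are the codimension-one data $(S\setminus\{s\},\,p_{S\setminus\{s\}}(F))$ for $s\in S$. Purity and dimension $|S|-1$ are then immediate.

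\textbf{Simpliciality.} We must show that a face is determined by its vertex set. Concretely, if $p_{S\setminus\{s\}}(G)=p_{S\setminus\{s\}}(G')$ for all $s\notin J$, where $G=p_J(F)$ and $G'=p_J(F')$, then $G=G'$. I would aim for the identity
\[p_J(F)=\bigcap_{s\notin J}p_{S\setminus\{s\}}(F),\]
which is the MacNeille analogue of $W_J=\bigcap_{s\notin J}W_{S\setminus\{s\}}$.

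The inclusion $\subseteq$ is easy. Since $p_{S\setminus\{s\}}=p_{S\setminus\{s\}}\circ p_J$, and $\tau_u(H)\supseteq H$ always holds because $\pi_u(x)\leqb x$, each $p_{S\setminus\{s\}}(F)$ contains $p_J(F)$.

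The reverse inclusion is the main obstacle. For it I would work with $\min(F)$ and the fact that $\pi_{w_\circ(K)}(x)$ is the minimal left coset representative of $W_Kx$. The goal is to show that if $w$ lies above some minimal $W_{S\setminus\{s\}}$-representative of an element of $F$ for each $s\notin J$, then $w$ lies above a single minimal $W_J$-representative. My first attempt would use the lifting property and the fact that projection to $^{K}W$ is Bruhat order-preserving, combined with the condition $F=\U(\L(F))$. Specifically, I would replace $F$ by $\U(\L(F))$ and compute everything through lower bounds: $\L(p_K(F))$ should be the set of $v$ with $v\leqb\pi_{w_\circ(K)}(x)$ for all $x\in F$. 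Intersections of up-sets then become statements about common lower bounds, which behave well under projection. If this intersection formula fails in general, the fallback is to prove directly that two facets sharing all vertices of a face have the same $p_J$, by induction on $|S\setminus J|$.

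\textbf{Shelling.} Fix a linear extension of $\leqr$. For a facet $F$ I claim:
\begin{itemize}
\item the face $(J,p_J(F))$ already appears in an earlier facet if and only if $J\cap\Des(F)\neq\emptyset$;
\item the faces of $F$ that are new are exactly those with $J\subseteq S\setminus\Des(F)$.
\end{itemize}

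For the "if" direction, take $s\in J\cap\Des(F)$. Then $\tau_s(F)<_{\mathsf L}F$ strictly, since $\tau_s(F)\ne F$ and $\tau_s(F)\leqr F$ by definition. Moreover $p_J(\tau_s(F))=p_J(F)$ by the absorption identity, so the face already lies in the earlier facet $\tau_s(F)$.

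For the "only if" direction, suppose $J\cap\Des(F)=\emptyset$. Then $\tau_t(F)=F$ for each $t\in J$, hence $p_J(F)=F$. Any facet $F'$ containing the face satisfies $F=p_J(F')\leqr F'$, so $F'$ comes weakly later and the face is new.

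The new faces therefore form the Boolean interval above the unique minimal new face $(S\setminus\Des(F),F)$. This is the standard restriction-face criterion for a shelling of a (simplicial, by the previous step) complex. The first facet, $F=W$, has empty descent set and contributes the whole simplex.
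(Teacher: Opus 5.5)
\textbf{The gap is in the simpliciality step.} You never prove the reverse inclusion $\bigcap_{s\notin J}p_{S\setminus\{s\}}(F)\subseteq p_J(F)$, which carries the whole weight of simpliciality; you leave it as a plan, and the plan as sketched does not close it. In your existential formulation, each $s\notin J$ comes with its own witness $x_s\in F$ satisfying $\pi_{w_\circ(S\setminus\{s\})}(x_s)\leqb w$. Neither the lifting property nor order-preservation of projections merges these into a single witness. Indeed, the inclusion is false for order filters outside $\Mac(W)$: in $S_3$, the filter $F=W\setminus\{e\}$ has $\nabla(\pi_{s_1}(F))=\nabla(\pi_{s_2}(F))=W\neq F$.

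Your instinct to pass to lower bounds is right. However, the formula you write for $\L(p_K(F))$ is immediate from the definitions, holds for every order filter, and never actually uses $F=\U(\L(F))$. More seriously, the identity has real content independent of MacNeille completions. Write $\rho_K=\pi_{w_\circ(K)}$ and take $J=\varnothing$ and $F=\iota(d)$. Using \eqref{eq:iota}, the identity then says exactly that $d\leqb y$ if and only if $\rho_{S\setminus\{s\}}(d)\leqb\rho_{S\setminus\{s\}}(y)$ for all $s\in S$. This is a special case of Deodhar's criterion for Bruhat order, so you must either cite it or reprove it. Your fallback of inducting on $|S\setminus J|$ is a restatement of the goal with no visible inductive step, for the same reason.

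\textbf{How the paper closes it.} There are two ingredients.
\begin{enumerate}[(1)]
\item Iterating \cref{Lem2} along a reduced word for $w_\circ(K)$ gives $p_K(F)=\U(\rho_K(\L(F)))$. Thus $y\in p_K(F)$ if and only if $\rho_K(d)\leqb y$ for every $d\in\L(F)$. This turns your existential condition into a universal one, so you may fix a single $d\in\L(F)$. This is exactly where $F\in\Mac(W)$ enters.
\item Fix $d$, and suppose $\rho_{S\setminus\{s\}}(d)\leqb y$ for all $s\notin J$. Apply $\rho_{S\setminus\{s\}}$, and use idempotence together with $\rho_{S\setminus\{s\}}\circ\rho_J=\rho_{S\setminus\{s\}}$. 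This gives $\rho_{S\setminus\{s\}}(\rho_J(d))\leqb\rho_{S\setminus\{s\}}(\rho_J(y))$ for all $s\notin J$. Deodhar's criterion on ${}^JW$ (the paper cites \cite[Corollary~1.4]{Stembridge}) then yields $\rho_J(d)\leqb\rho_J(y)\leqb y$. Since this holds for every $d\in\L(F)$, you get $y\in p_J(F)$.
\end{enumerate}

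\textbf{The rest of the proposal is correct.} Your purity argument is fine. So is your easy inclusion, via $p_{S\setminus\{s\}}=p_{S\setminus\{s\}}\circ p_J$ and $\tau_u(H)\supseteq H$.

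Your shelling argument is also correct, and it takes a genuinely different route from the paper's. The paper checks the pairwise ridge criterion. It needs the intersection lemma to force $J\cap\Des(F_j)\neq\varnothing$, where $J$ is the set of types at which the vertices of $F_i$ and $F_j$ differ. You instead read off the restriction face $\{(S\setminus\{s\},p_{S\setminus\{s\}}(F)):s\in\Des(F)\}$ directly from the face poset. You do this via $p_J\circ\tau_s=p_J$ for $s\in J$, and $p_J(F)=F$ when $J\cap\Des(F)=\varnothing$. Your version uses the lemma only to know that the complex is simplicial. It also shows that the $h$-vector counts elements of $\Mac(W)$ by their number of descents.
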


\subsection{Outline} In \cref{sec:defining}, we prove \cref{thm:descents}, showing that the notions in \cref{def:main} are well defined. In \cref{sec:subword}, we discuss subword complexes and prove \cref{thm:vertex}. \cref{sec:CM} discusses Stanley--Reisner rings and Cohen--Macaulayness and proves \cref{thm:CM}. We prove \cref{thm:pop} in \cref{sec:pop}. In \cref{sec:complex}, we define the MacNeille complex and prove \cref{thm:complex}. 

In \cref{sec:AI}, we discuss the use of ChatGPT in producing this manuscript, especially for proving \cref{thm:CM}. We provide a table summarizing several conversations the author had with ChatGPT in an attempt to see which types of prompt led to success. 

\section{Defining Weak Order}\label{sec:defining} 
The purpose of this section is to prove \cref{thm:descents}. We begin with some lemmas that pertain to an arbitrary poset $P$. 

\begin{lemma}\label{lem:easy} 
Let $P$ be a poset. For each subset $X\subseteq P$, we have $X\subseteq \U(\L(X))$ and $X\subseteq \L(\U(X))$. 
\end{lemma}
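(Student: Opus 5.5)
The plan is to unwind the definitions of $\U$ and $\L$ directly; no earlier result is needed. For the first inclusion, fix $x\in X$. To show $x\in\U(\L(X))$, we must check that $x\geq p$ for every $p\in\L(X)$. By definition, $p\in\L(X)$ means $p\leq y$ for all $y\in X$. Taking $y=x$ gives $p\leq x$. Since $p$ was arbitrary, $x$ is an upper bound for $\L(X)$, so $x\in\U(\L(X))$.

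The second inclusion is the order-dual statement, and we will prove it the same way. Fix $x\in X$ and $p\in\U(X)$. Then $p\geq y$ for all $y\in X$, and in particular $p\geq x$. Hence $x$ is a lower bound for $\U(X)$, which means $x\in\L(\U(X))$.

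There is no real obstacle here. The only point to note is the degenerate case in which $\L(X)$ or $\U(X)$ is empty. In that case the relevant condition holds vacuously, so every element of $P$ lies in $\U(\L(X))$ (respectively $\L(\U(X))$), and the inclusion holds trivially. The argument above already covers this case without change.
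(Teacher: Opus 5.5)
Your proof is correct and is essentially the paper's argument. The paper unwinds the definitions for $X\subseteq \U(\L(X))$ exactly as you do and calls the dual containment analogous, which you write out explicitly.
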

\begin{proof}
We will only prove that $X\subseteq \U(\L(X))$ since the proof of the other containment is completely analogous. Choose $x\in X$. We wish to show that $x\in\U(\L(X))$, so we need to show that $x\geq x'$ for every $x'\in\L(X)$. This is immediate from the definition of $\L(X)$. 
\end{proof}

\begin{lemma}\label{lem:easy2} 
Let $P$ be a poset. If $X\subseteq P$, then $\U(X)=\U(\L(\U(X)))$. 
\end{lemma}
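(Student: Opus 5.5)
The plan is to prove the two containments $\U(X)\subseteq\U(\L(\U(X)))$ and $\U(\L(\U(X)))\subseteq\U(X)$ separately. Both will follow from \cref{lem:easy} together with the order-reversing property of $\U$, so no further structure of $P$ is needed.

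For the first containment, I will apply \cref{lem:easy} to the subset $\U(X)$ in place of $X$. The first half of that lemma gives $\U(X)\subseteq\U(\L(\U(X)))$ at once.

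For the reverse containment, I first record the monotonicity of $\U$. If $Y\subseteq Z\subseteq P$, then $\U(Z)\subseteq\U(Y)$. Indeed, an element lying above every element of $Z$ in particular lies above every element of $Y$. Next, the second half of \cref{lem:easy} gives $X\subseteq\L(\U(X))$. Applying the order-reversing property of $\U$ to this inclusion yields $\U(\L(\U(X)))\subseteq\U(X)$.

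Combining the two containments gives the equality $\U(X)=\U(\L(\U(X)))$. There is no real obstacle here; the only point requiring care is to apply \cref{lem:easy} to the set $\U(X)$ in the first step and to $X$ itself in the second.
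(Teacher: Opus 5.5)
Your proposal is correct and matches the paper's proof: both containments come from \cref{lem:easy}, applied to $\U(X)$ for the first and to $X$ for the second, combined with the order-reversing property of $\U$. Your explicit statement of that monotonicity is just a detail the paper leaves implicit.
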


\begin{proof}
Let $Y=\U(X)$. We know by \cref{lem:easy} that $Y\subseteq \U(\L(Y))$, so $\U(X)\subseteq \U(\L(\U(X)))$. On the other hand, because $X\subseteq \L(\U(X))$ by \cref{lem:easy}, we must have $\U(X)\supseteq \U(\L(\U(X)))$. 
\end{proof}

\begin{corollary}\label{Cor1}
An order filter $F$ of $P$ is in $\Mac(P)$ if and only if $F=\U(X)$ for some $X\subseteq P$. 
\end{corollary}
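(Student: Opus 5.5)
The plan is to prove the two implications separately, using only the definition of $\Mac(P)$ and \cref{lem:easy2}.

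For the forward direction, suppose $F\in\Mac(P)$. By definition this means $F=\U(\L(F))$. Taking $X=\L(F)$ then exhibits $F$ in the required form $F=\U(X)$, so nothing further is needed.

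For the converse, suppose $F$ is an order filter with $F=\U(X)$ for some $X\subseteq P$. I will apply \cref{lem:easy2} to this $X$, which gives
\[\U(\L(F))=\U(\L(\U(X)))=\U(X)=F.\]
This is exactly the defining condition for $F\in\Mac(P)$.

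I do not expect any real obstacle. The only remark worth making is that the order-filter hypothesis is automatic: any set of the form $\U(X)$ is already upward closed. So the corollary really says that $\Mac(P)$ is precisely the collection of sets of the form $\U(X)$ with $X\subseteq P$.
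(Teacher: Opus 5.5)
Your proof is correct and is essentially the paper's own argument: you take $X=\L(F)$ for the forward direction and apply \cref{lem:easy2} for the converse. Your side remark is also correct: any set of the form $\U(X)$ is automatically an order filter.
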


\begin{proof}
If $F\in\Mac(P)$, then $F=\U(\L(F))$ by the definition of $\Mac(P)$. Conversely, if $F=\U(X)$ for some $X\subseteq P$, then it follows from \cref{lem:easy2} that $F=\U(\L(F))$, so $F\in\Mac(P)$. 
\end{proof} 

We now specialize our attention to the case where $P$ is a Coxeter group $W$ endowed with the Bruhat order. Let $S$ be the set of simple reflections of $W$. Recall the definitions from \eqref{eq:pis}, \eqref{eq:piuX}, and \eqref{eq:nabla}. We say a map $\varphi\colon W\to W$ is \dfn{order-preserving} if for all $x,y\in W$ with $x\leqb y$, we have $\varphi(x)\leqb\varphi(y)$. For each $s\in S$, it follows from the Lifting Property of the Bruhat order \cite[Proposition~2.2.7]{BB} that the map $\pi_s\colon W\to W$ is order-preserving. As a consequence, for every $u\in W$, the map $\pi_u\colon W\to W$ is order-preserving.

\begin{lemma}\label{Lem2}
For $X\subseteq W$ and $s\in S$, we have $\U(\pi_s(X))=\nabla(\pi_s(\U(X)))$. 
\end{lemma}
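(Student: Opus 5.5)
We must show $\U(\pi_s(X))=\nabla(\pi_s(\U(X)))$. The plan is to prove the two containments separately, using that $\pi_s$ is order-preserving and the Lifting Property.

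\emph{The containment $\supseteq$.} Since $\U(\pi_s(X))$ is an order filter, it suffices to show $\pi_s(\U(X))\subseteq\U(\pi_s(X))$. Take $y\in\U(X)$ and any $x\in X$. Then $x\leqb y$, and because $\pi_s$ is order-preserving, $\pi_s(x)\leqb\pi_s(y)$. Hence $\pi_s(y)\in\U(\pi_s(X))$.

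\emph{The containment $\subseteq$.} Take $w\in\U(\pi_s(X))$, so $\pi_s(x)\leqb w$ for all $x\in X$. We must find $y\in\U(X)$ with $\pi_s(y)\leqb w$. The natural candidate is
\[
y=\max(w,sw)=\begin{cases} sw & \text{if } s\notin\Des(w),\\ w & \text{if } s\in\Des(w).\end{cases}
\]
Then $s\in\Des(y)$ and $\pi_s(y)=\min(w,sw)\leqb w$. It remains to show $x\leqb y$ for every $x\in X$.

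Fix $x\in X$, and note that $\pi_s(x)=\min(x,sx)\leqb w\leqb y$. If $x=\pi_s(x)$, then $x\leqb y$ directly. Otherwise $x=s\,\pi_s(x)$ with $s\in\Des(x)$. Set $z=\pi_s(x)$, so $s\notin\Des(z)$, $z\leqb y$, and $s\in\Des(y)$. The Lifting Property \cite[Proposition~2.2.7]{BB}, in the version where $s$ is a left descent of $y$ but not of $z$, gives $sz\leqb y$, that is, $x\leqb y$.

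Thus $y\in\U(X)$ and $w\in\nabla(\pi_s(\U(X)))$. The only delicate point is choosing $y$ as the $s$-upper element of the pair $\{w,sw\}$, so that the Lifting Property applies; beyond that, both containments are routine.
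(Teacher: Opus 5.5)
Your proof is correct and takes essentially the same approach as the paper. Both prove the containment $\supseteq$ using that $\pi_s$ is order-preserving and that $\U(\pi_s(X))$ is an order filter, and both prove $\subseteq$ by taking the $s$-upper element of $\{w,sw\}$ as the witness in $\U(X)$. The paper treats $s\in\Des(w)$ and $s\notin\Des(w)$ as separate cases and uses the Lifting Property implicitly, while you merge the two cases and invoke the Lifting Property explicitly.
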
 

\begin{proof}
We first show that $\U(\pi_s(X))\subseteq\nabla(\pi_s(\U(X)))$. Choose $u\in \U(\pi_s(X))$. Our goal is to show that there exists $w\in \U(X)$ such that $\pi_s(w)\leqb u$. 

First, suppose $s\in \Des(u)$. If $x\in X$, then $\pi_s(x)\leqb u$ by the choice of $u$. Since $s\in \Des(u)$, we must actually have $x \leqb u$. This shows that $u\in \U(X)$, so we can put $w=u$. 

Next, suppose $s\not\in \Des(u)$. For every $x\in X$, we have $\pi_s(x) \leqb u$, so $x \leqb su$. This shows that $su\in U(X)$, so we can put $w=su$. 

We now need to prove the reverse containment $\nabla(\pi_s(\U(X)))\subseteq \U(\pi_s(X))$. The map $\pi_s$ is order-preserving. It follows that if $v\in \U(X)$, then $\pi_s(x) \leqb \pi_s(v)$ for all $x\in X$, so ${\pi_s(v)\in \U(\pi_s(X))}$. This shows that $\pi_s(\U(X))\subseteq \U(\pi_s(X))$. Because $\U(\pi_s(X))$ is an order filter, we must have $\nabla(\pi_s(\U(X)))\subseteq \U(\pi_s(X))$. 
\end{proof} 

We are now in a position to prove \cref{thm:descents}, which states that for  $F\in\Mac(W)$, $u\in W$, and $Y\subseteq W$, we have $\nabla(\pi_u(F))\in\Mac(W)$ and $\nabla(\pi_u(\nabla(Y)))=\nabla(\pi_u(Y))$.  
\begin{proof}[Proof of \cref{thm:descents}]
Choose $F\in\Mac(W)$, $u\in W$, and $Y\subseteq W$.
We proceed by induction on $\ell(u)$, noting that the case where $\ell(u)=0$ is trivial since $\pi_e$ is the identity operator. Let us first assume that $u=s\in S$. Let $X=\L(F)$. We have $F=\U(X)$, so \cref{Lem2} implies that $\nabla(\pi_s(F))=\U(\pi_s(X))$. It follows from \cref{Cor1} that $\nabla(\pi_s(F))\in\Mac(W)$. The containment $\nabla(\pi_s(\nabla(Y)))\supseteq\nabla(\pi_s(Y))$ is immediate, so we must prove the reverse containment. To do so, it suffices to show that $\pi_s(\nabla(Y))\subseteq\nabla(\pi_s(Y))$. Choose $x\in \pi_s(\nabla(Y))$, and write $x=\pi_s(z)$ for some $z\in\nabla(Y)$. There exists $y\in Y$ such that $y\leqb z$. Since $\pi_s$ is order-preserving, we have $\pi_s(y)\leqb\pi_s(z)=x$. Since $\pi_s(y)\in\pi_s(Y)$, this shows that $x\in\nabla(\pi_s(Y))$, as desired. 

Now suppose $\ell(u)>1$, and write $u=sv$ for some $s\in \Des(u)$. We know by induction that $\nabla(\pi_v(F))\in\Mac(W)$, so it follows from the preceding paragraph that 
\begin{align*}
\nabla(\pi_u(F))&=\nabla(\pi_s(\pi_v(F))) \\ 
&=\nabla(\pi_s(\nabla(\pi_v(F))))\in\Mac(W).
\end{align*} 
By repeated application of the induction hypothesis, we find that 
\begin{align*}
\nabla(\pi_u(\nabla(Y)))&=\nabla(\pi_s(\pi_v(\nabla(Y)))) \\ &=\nabla(\pi_s(\nabla(\pi_v(\nabla(Y))))) \\ 
&=\nabla(\pi_s(\nabla(\pi_v(Y)))) \\ 
&=\nabla(\pi_s(\pi_v(Y))) \\ 
&=\nabla(\pi_u(Y)), 
\end{align*} 
as desired. 
\end{proof} 

Recall that $\iota\colon W\to\Mac(W)$ is the poset embedding defined by $\iota(x)=\U(\{x\})$. For all $u,w\in W$, the fact that $\pi_u\colon W\to W$ is order-preserving implies that 
\begin{equation}\label{eq:iota}
\iota(\pi_u(w))=\tau_u(\iota(w)).
\end{equation} 

\section{Subword Complexes}\label{sec:subword} 

Let $V$ be a finite set. Let $\Delta$ be an abstract simplicial complex with vertex set $V$. That is, $\Delta$ is a collection of subsets of $V$ such that if $D\in\Delta$ and $D'\subseteq D$, then $D'\in\Delta$. The sets in $\Delta$ are called \dfn{faces}, and the inclusion-maximal faces are called \dfn{facets}. For $v\in V$, the \dfn{deletion} of $v$ from $\Delta$ is the abstract simplicial complex \[\del_v(\Delta)=\{D\in\Delta:v\not\in D\},\] and the \dfn{link} of $v$ in $\Delta$ is the abstract simplicial complex 
\[\lk_v(\Delta)=\{D\in\del_v(\Delta):(D\cup\{v\})\in\Delta\}.\] 
We say $\Delta$ is \dfn{vertex-decomposable} if either of the following hold:
\begin{itemize}
\item $\Delta$ is a simplex (i.e., has a unique facet) or is empty;
\item there exists a vertex $v$ such that $\del_v(\Delta)$ and $\lk_v(\Delta)$ are both vertex-decomposable and every facet of $\del_v(\Delta)$ is a facet of $\Delta$. 
\end{itemize} 

A \dfn{reduced word} for an element $w\in W$ is a word of length $\ell(w)$ over $S$ whose product is $w$. Let $\Q$ be a finite word over $S$. Formally, we view $\Q$ as a function from a set $V\subset\mathbb Z$ to $S$. If $V=\{k_1<\cdots<k_t\}$, then we can write $\Q=(s_{k_1},\ldots,s_{k_t})$, where $s_{k_i}$ is the image of $k_i$ under this function. For $V'\subseteq V$, we write $\Q\vert_{V'}$ for the restriction of $\Q$ to $V'$. The \dfn{subword complex} $\Delta(\Q,w)$ is the abstract simplicial complex with vertex set $V$ whose faces are the sets $D$ such that $\Q\vert_{V\setminus D}$ contains a reduced word for $w$. For $F\in\Mac(W)$, we are interested in the abstract simplicial complex 
\[\Delta(\Q,F):=\bigcup_{w\in \min(F)}\Delta(\Q,w).\] 

Our aim in this section is to prove \cref{thm:vertex}. The key point is the following lemma, which makes use of the $0$-Hecke action on $\Mac(W)$. 

\begin{lemma}\label{lem:deletion} 
Fix $F\in\Mac(W)$ and a nonempty finite word $\Q$ over $S$ with index set $V$. Write $V=\{k_1<\cdots<k_t\}$, and let $V'=V\setminus\{k_1\}$. Write $\Q=s\Q'$, where $s\in S$ and $\Q'=\Q\vert_{V'}$. We have 
\[\del_{k_1}(\Delta(\Q,F))=\Delta(\Q',\tau_s(F)).\]
\end{lemma}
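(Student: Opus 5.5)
The plan is to reformulate membership in $\Delta(\Q,F)$ in terms of Demazure products, and then to show that prepending the letter $s$ corresponds exactly to the $0$-Hecke action $\tau_s$. For a finite word $\mathsf R$ over $S$, let $\delta(\mathsf R)\in W$ denote its Demazure product. This is the product computed in the monoid with $s\ast x=sx$ if $s\notin\Des(x)$ and $s\ast x=x$ if $s\in\Des(x)$. I will use the standard fact (Knutson--Miller, or the subword property of Bruhat order) that $\mathsf R$ contains a reduced word for $w$ if and only if $w\leqb\delta(\mathsf R)$.

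\textbf{Step 1: reformulate the faces.} I claim that for any word $\mathsf R$ and any order filter $F$, the following are equivalent: $\mathsf R$ contains a reduced word for some $w\in\min(F)$, and $\delta(\mathsf R)\in F$. If $w\in\min(F)$ and $w\leqb\delta(\mathsf R)$, then $\delta(\mathsf R)\in F$ because $F$ is a filter. Conversely, suppose $\delta(\mathsf R)\in F$. Bruhat lower intervals are finite, so some $w\in\min(F)$ satisfies $w\leqb\delta(\mathsf R)$, and then $\mathsf R$ contains a reduced word for $w$. Consequently, $D\in\Delta(\Q,F)$ if and only if $\delta(\Q\vert_{V\setminus D})\in F$.

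\textbf{Step 2: reduce to a statement about $s$.} Take $D\subseteq V'$, the faces relevant to $\del_{k_1}$. Then $\Q\vert_{V\setminus D}=s\,\mathsf R$ with $\mathsf R=\Q'\vert_{V'\setminus D}$, so $\delta(\Q\vert_{V\setminus D})=s\ast x$ where $x=\delta(\mathsf R)$. The faces of $\Delta(\Q',\tau_s(F))$ are exactly such $D$, and by Step 1 applied to $\tau_s(F)$, which lies in $\Mac(W)$ by \cref{thm:descents} and in particular is a filter, $D$ is a face if and only if $x\in\tau_s(F)=\nabla(\pi_s(F))$. The lemma therefore reduces to showing, for every $x\in W$,
\[ s\ast x\in F \iff \text{there is } y\in F \text{ with } \pi_s(y)\leqb x. \]

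\textbf{Step 3: prove the equivalence.} For the forward direction, put $y=s\ast x$. If $s\notin\Des(x)$, then $\pi_s(sx)=x$. If $s\in\Des(x)$, then $\pi_s(x)=sx\leqb x$. In either case $\pi_s(y)\leqb x$.

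For the backward direction, I first check that $y\leqb s\ast\pi_s(y)$. If $s\in\Des(y)$, then $s\ast(sy)=y$. Otherwise $\pi_s(y)=y$ and $s\ast y=sy\geqb y$.

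Next I need that $x\mapsto s\ast x$ is order-preserving, which follows from the Lifting Property, exactly as for $\pi_s$. Combining the two facts gives $y\leqb s\ast\pi_s(y)\leqb s\ast x$. Since $F$ is a filter, $s\ast x\in F$.

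The only place needing care is this monotonicity of $s\ast$ together with the subword-property characterization in Step 1. Both are standard, and I expect no real obstacle beyond citing them correctly.
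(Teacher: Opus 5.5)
Your proof is correct, but it takes a different route from the paper's.

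\textbf{The paper's route.} The paper works directly with reduced subwords. For the inclusion $\del_{k_1}(\Delta(\Q,F))\subseteq\Delta(\Q',\tau_s(F))$ it splits on whether the reduced word for $w\in\min(F)$ uses the initial letter $s$. For both inclusions it relies on the containment $\min(\tau_s(F))\subseteq\{\pi_s(w):w\in\min(F)\}$, which follows from $\pi_s$ being order-preserving, together with the subword property.

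\textbf{Your route.} You first encode face membership as $D\in\Delta(\Q,F)\iff\delta(\Q\vert_{V\setminus D})\in F$, which holds for any order filter $F$. This turns the lemma into the pointwise identity
\[\tau_s(F)=\{x\in W: s\ast x\in F\}.\]
Your Step~3 is really the adjunction $\pi_s(y)\leqb x\iff y\leqb s\ast x$ between $\pi_s$ and left Demazure multiplication. Incidentally, you do not need \cref{thm:descents} in Step~2, since $\nabla(X)$ is an order filter for every $X$.

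\textbf{What each buys.} Your approach is more conceptual: $\tau_s$ becomes a preimage under the Demazure action, and the case analysis on the first letter disappears. Iterating along a reduced word also gives $\tau_u(F)=\{x: u^{-1}\ast x\in F\}$ directly. The price is importing two standard facts the paper does not need:
\begin{enumerate}[(a)]
\item the characterization that $\mathsf R$ contains a reduced word for $w$ if and only if $w\leqb\delta(\mathsf R)$;
\item the monotonicity of $x\mapsto s\ast x$.
\end{enumerate}
The paper's argument needs only the subword property and the monotonicity of $\pi_s$. Both imported facts are true, and your Lifting Property justification of (b) is sound, so there is no gap.
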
 
\begin{proof}
Observe that a set $D\subseteq V'$ is a face of $\del_{k_1}(\Delta(\Q,F))$ if and only if $s\Q'\vert_{V'\setminus D}$ contains a reduced word for an element of $\min(F)$. In addition, note that because $\pi_s$ is order-preserving, we have \[\min(\tau_s(F))\subseteq\{\pi_s(w):w\in\min(F)\}.\]

Suppose $D$ is a face of $\del_{k_1}(\Delta(\Q,F))$. Then $s\Q'\vert_{V'\setminus D}$ contains a reduced word $\mathsf{w}$ for an element $w$ of $\min(F)$. If $\mathsf{w}$ uses the initial $s$ in $s\Q'\vert_{V'\setminus D}$, then $s\in\Des(w)$, so $sw=\pi_s(w)\in\tau_s(F)$. Deleting the initial $s$ from $\mathsf{w}$ yields a reduced word $\mathsf{v}$ for $\pi_s(w)$ contained in $\Q'\vert_{V'\setminus D}$. There exists $y\in\min(\tau_s(F))$ such that $y\leqb \pi_s(w)$, and $\mathsf{v}$ contains a reduced word for $y$, so  $D\in\Delta(\Q',\tau_s(F))$. On the other hand, if $\mathsf{w}$ does not use the initial $s$, then $s\Q'\vert_{V'\setminus D}$ already contains $\mathsf{w}$, which in turn contains a reduced word for $\pi_s(w)$ since $\pi_s(w)\leqb w$. In this case, we again conclude that $D\in\Delta(\Q',\tau_s(F))$. This shows that $\del_{k_1}(\Delta(\Q,F))\subseteq\Delta(\Q',\tau_s(F))$. 

To prove the reverse containment, assume now that $D\in\Delta(\Q',\tau_s(F))$. Then $\Q'\vert_{V'\setminus D}$ contains a reduced word $\mathsf{u}$ for an element $u\in\min(\tau_s(F))$. We have $u=\pi_s(u')$ for some $u'\in\min(F)$. Since the word $s\mathsf{u}$ contains a reduced word for $u'$, it follows that $s\Q'\vert_{V'\setminus D}$ contains a reduced word for $u'$. Hence, $D\in\del_{k_1}(\Delta(\Q,F))$. 
\end{proof} 

\begin{proof}[Proof of \cref{thm:vertex}]
Fix $F\in\Mac(W)$ and a finite word $\Q$ over $S$ with index set $V\subset \mathbb Z$. Assume that for every $F'\in\Mac(W)$ with $F'\leqr F$, all elements of $\min(F')$ have the same Coxeter length. Our goal is to prove that $\Delta(\Q,F)$ is vertex-decomposable. We proceed by induction on $|V|$. 

Write $V=\{k_1<\cdots<k_t\}$, and let $V'=V\setminus\{k_1\}$. Write $\Q=s\Q'$, where $s\in S$ and $\Q'=\Q\vert_{V'}$. It is immediate from the definitions that \[\lk_{k_1}(\Delta(\Q,F))=\Delta(\Q',F),\] so we know by induction that $\lk_{k_1}(\Delta(\Q,F))$ is vertex-decomposable. \cref{lem:deletion} tells us that \[\del_{k_1}(\Delta(\Q,F))=\Delta(\Q',\tau_s(F)).\] For every $F'\in\Mac(W)$ with $F'\leqr \tau_s(F)$, we have $F'\leqr F$, so all elements of $\min(F')$ have the same Coxeter length. Thus, we can apply induction to deduce that $\del_{k_1}(\Delta(\Q,F))$ is vertex-decomposable. 

If $s\not\in\Des(F)$, then $\tau_s(F)=F$, so $\del_{k_1}(\Delta(\Q,F))=\Delta(\Q',F)=\lk_{k_1}(\Delta(\Q,F))$. This says that every face in $\Delta(\Q',F)$ can be extended to a face of $\Delta(\Q,F)$ by adding $k_1$, so $\Delta(\Q,F)$ is a cone over $\Delta(\Q',F)$. It is a standard fact that a cone over a vertex-decomposable simplicial complex is vertex-decomposable; hence, $\Delta(\Q,F)$ is vertex-decomposable in this case. 

Now assume $s\in\Des(F)$ so that $\tau_s(F)\neq F$. Let $D\subseteq V'$ be a facet of $\Delta(\Q',\tau_s(F))$; we aim to show that $D$ is also a facet of $\Delta(\Q,F)$. Note that $\Q'\vert_{V'\setminus D}$ is a reduced word $\mathsf{w}$ for an element $w\in\min(\tau_s(F))$. All minimal elements of $\tau_s(F)$ have the same Coxeter length, say $m$. Similarly, all minimal elements of $F$ have the same Coxeter length. Since $\min(\tau_s(F))\subseteq\{\pi_s(u):u\in\min(F)\}$ and $\tau_s(F)\neq F$, the minimal elements of $F$ all have Coxeter length $m+1$. Consequently, $w=\pi_s(sw)$, where $sw\in\min(F)$. It follows that $\Q\vert_{V\setminus D}=s\mathsf{w}$ is a reduced word for $sw$, so $D$ is a facet of $\Delta(\Q,F)$. 
\end{proof}

The preceding inductive proof of \cref{thm:vertex} is somewhat routine. A similar argument appears in Jahn's thesis \cite{Jahn}, which is a precursor for studying unions of subword complexes.

\section{Cohen--Macaulay ASM Varieties}\label{sec:CM} 

In this section, we will prove \cref{thm:CM}. We first need a few more definitions. 

Let $\mathcal X_V=\{x_v:v\in V\}$ be a set of variables indexed by a finite set $V$, and let $\mathbb K[\mathcal X_V]$ be the corresponding polynomial ring over the field $\mathbb K$. For $D\subseteq V$, let $x_D=\prod_{v\in D}x_v$. Given a simplicial complex $\Delta$ with vertex set $V$, we define the \dfn{Stanley--Reisner ideal} to be the ideal 
\[I_\Delta:=(x_D:D\not\in \Delta)\subseteq\mathbb K[\mathcal X_V]\]  generated by the monomials indexed by the non-faces of $\Delta$. The \dfn{Stanley--Reisner ring} of $\Delta$ is 
\[\mathbb K[\Delta]:=\mathbb K[\mathcal X_V]/I_\Delta.\] We say $\Delta$ is \dfn{pure} if all of its facets have the same cardinality.  

A Noetherian ring is \dfn{Cohen--Macaulay} if each of its localizations at a prime ideal has depth equal to its Krull dimension.  A simplicial complex $\Delta$ is \dfn{Cohen--Macaulay} over $\mathbb K$ if its
Stanley--Reisner ring $\mathbb K[\Delta]$ is Cohen--Macaulay.  We will use the standard fact that a pure vertex-decomposable simplicial complex is Cohen--Macaulay over every field.

If $\preceq$ is a term order on a polynomial ring and $I$ is an ideal, then
$\mathrm{in}_{\preceq}(I)$ denotes the initial ideal of $I$, which is the ideal generated by the
initial monomials of the elements of $I$.  An \dfn{antidiagonal term order} on the polynomial ring 
$\RR_n:=\mathbb K[z_{i,j}:i,j\in[n]]$ is a term order with the property that the initial monomial of each
minor of the matrix $Z=(z_{i,j})_{i,j\in[n]}$ is the product of the entries on its antidiagonal.  Explicitly, if the minor uses
rows $a_1<\cdots<a_r$ and columns $b_1<\cdots<b_r$, then its antidiagonal monomial is
\[
z_{a_1,b_r}z_{a_2,b_{r-1}}\cdots z_{a_r,b_1}.
\]

\begin{proof}[Proof of \cref{thm:CM}]
Let $A\in\ASM_n$. Escobar, Klein, and Weigandt already proved the equivalence of the statements \eqref{(i)}, \eqref{(ii')}, and \eqref{(ii)} (see \cite[Propositions~2.4~\&~3.20]{EKW}). They also noted that \eqref{(iii)} implies \eqref{(ii)} because Cohen--Macaulay varieties with standard graded coordinate rings are always equidimensional. Thus, we just need to show that \eqref{(ii')} implies \eqref{(iii)}. 

Suppose that for every $B\in[e,A]_{\mathsf{L}}$, all minimal elements of the order filter $\Psi(B)$ have the same Coxeter length. Fix $B\in[e,A]_{\mathsf{L}}$, and let $F=\Psi(B)$. It follows from \cref{thm:vertex} that $\Delta(\Q,F)$ is vertex-decomposable for every finite word $\Q$ over the simple reflections of $S_n$. We will prove that $X_B$ is Cohen--Macaulay.

Let 
\[
\Lambda_n=\{(i,j)\in[n]\times[n]: i+j\le n\} 
\]
be the staircase Young diagram (in English conventions), where $(i,j)$ represents the box in row $i$ and column $j$. Order $\Lambda_n$ by reading rows from top to bottom and, within each row, from right to left.
Mapping each pair $(i,j)$ to the simple reflection $s_{i+j-1}$ yields the word
\[
\Q_n=(s_{n-1}s_{n-2}\cdots s_1)(s_{n-1}s_{n-2}\cdots s_2)\cdots(s_{n-1}). \] 
This word $\Q_n$ is a reduced word for the long element of $S_n$, so every permutation in $S_n$ has a reduced word appearing as a subword of $\Q_n$.  By assumption, all elements of $\min(F)$ have the same Coxeter length, say $m$.  Every facet of $\Delta(\Q_n,F)$ is the complement
of a reduced word for some element of $\min(F)$, so every facet has cardinality $\binom{n}{2}-m$. Thus, $\Delta(\Q_n,F)$ is pure.  We view the Stanley--Reisner ideal $I_{\Delta(\Q_n,F)}$ as an ideal of $\RR_n':=\mathbb K[z_{i,j}:(i,j)\in\Lambda_n]$, which is contained in the polynomial ring $\RR_n=\mathbb K[z_{i,j}:i,j\in[n]]$. Since $\Delta(\Q_n,F)$ is vertex-decomposable, its
Stanley--Reisner ring $\RR_n'/I_{\Delta(\Q_n,F)}=\mathbb K[\Delta(\Q_n,F)]$ is Cohen--Macaulay. 

We now relate the Stanley--Reisner ring $\mathbb K[\Delta(\Q_n,F)]$ to the ASM ideal $I_B$.  Weigandt \cite[Proposition~5.4]{Wei17} proved that the minimal prime
decomposition of $I_B$ is given by 
\[
I_B=\bigcap_{w\in \min(F)} I_w.
\] Let $\preceq$ be an antidiagonal term order on $\RR_n$. 
Axelrod--Freed, Hao, Kendall, Klein, and Luo \cite[Proposition~3.8]{AFHKKL} proved that
\[
\operatorname{in}_\preceq(I_B)
=
\bigcap_{w\in \min(F)} \operatorname{in}_\preceq(I_w).
\]
For each $w\in S_n$, Knutson and Miller's pipe-dream degeneration theorem \cite[Theorem~B]{KM05} identifies $\operatorname{in}_\preceq(I_w)$ with
the Stanley--Reisner ideal of the subword complex $\Delta(\Q_n,w)$.  Hence,
\[
\operatorname{in}_\preceq(I_B)
=
\bigcap_{w\in \min(F)}I_{\Delta(\Q_n,w)}\RR_n
=
I_{\Delta(\Q_n,F)}\RR_n. 
\]
It follows that $\RR_n/\operatorname{in}_{\preceq}(I_B)$ is a polynomial extension of
$\mathbb K[\Delta(\Q_n,F)]$.  Therefore $\RR_n/\operatorname{in}_{\preceq}(I_B)$ is Cohen--Macaulay.  Axelrod--Freed, Hao, Kendall, Klein, and Luo \cite[Proposition~3.10]{AFHKKL} proved that $\RR_n/I_B$ is Cohen--Macaulay if and only if $\RR_n/\operatorname{in}_\preceq (I_B)$ is Cohen--Macaulay.  This shows that $\RR_n/I_B$ is Cohen--Macaulay, so
$X_B$ is Cohen--Macaulay. 
\end{proof}

\section{MacNeille Pop-Stack Operators}\label{sec:pop} 
Let us now assume that the Coxeter system $(W,S)$ is finite and irreducible. For $J\subseteq S$, let $W_J$ be the parabolic subgroup generated by $J$, and let $w_\circ(J)$ be the long element of $W_J$. Recall from \cref{def:pop} that we define the \dfn{MacNeille pop-stack operator} $\MacPop\colon \Mac(W)\to\Mac(W)$ by 
\[\MacPop(F)=\tau_{w_\circ(\Des(F))}(F).\] 
The \dfn{pop-stack operator} of $W$ is the map $\Pop\colon W\to W$ defined by \[\Pop(x)=\pi_{w_\circ(\Des(x))}(x).\] For each $u\in W$, the fact that $\pi_u$ is order-preserving implies that $\iota(\pi_u(x))=\tau_u(\iota(x))$ for every $x\in W$. In particular,  
\[
\iota(\Pop(x))=\MacPop(\iota(x))
\] for every $x\in W$.

Let $W^J$ denote the set of minimum-length representatives of the left cosets in $W/W_J$. Every element $w\in W$ has a unique factorization of the form $w=w^Jw_J$, where $w^J\in W^J$ and $w_J\in W_J$. 

\begin{lemma}\label{lem:commuting-descents}
Fix $s\in S$, and let $J=S\setminus\{s\}$. For every integer $t\geq 0$, the elements of $\Des(\Pop^t(w_\circ^J))$ commute pairwise.
\end{lemma}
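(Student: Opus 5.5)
The plan is to track the whole orbit of $w_\circ^J$ under $\Pop$ through inversion sets, and to show that at every step the descents are simple roots lying in one layer of a root-poset filter. Two simple reflections in such a layer will then have to commute.

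\textbf{Step 1: fix notation.} For $x\in W$, let
\[\mathrm{Inv}(x)=\{\beta\in\Phi^+:x^{-1}\beta<0\}.\]
Then $\Des(x)$ is exactly the set of simple reflections whose simple roots lie in $\mathrm{Inv}(x)$. Whenever $\ell(ux)=\ell(u)+\ell(x)$, we have
\[\mathrm{Inv}(ux)=\mathrm{Inv}(u)\sqcup u\,\mathrm{Inv}(x).\]

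\textbf{Step 2: first iterates.} Write $x_t=\Pop^t(w_\circ^J)$ and $D_t=\Des(x_t)$. Since all of $D_t$ consists of descents of $x_t$, we have $x_t=w_\circ(D_t)\,x_{t+1}$ with lengths adding, so
\[\mathrm{Inv}(x_t)=\Phi^+_{D_t}\sqcup w_\circ(D_t)\,\mathrm{Inv}(x_{t+1}).\]
From $x_0^{-1}=w_\circ(J)w_\circ$, a direct computation gives
\[\mathrm{Inv}(x_0)=\Phi^+\setminus\Phi^+_{J^*},\qquad J^*=w_\circ J w_\circ=S\setminus\{s^*\}.\]
So $\mathrm{Inv}(x_0)$ is the set of positive roots whose $\alpha_{s^*}$-coefficient is positive. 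Hence $D_0=\{s^*\}$, and the claim holds trivially for $t=0$.

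The right descents of each $x_t$ stay inside $\{s\}$. Indeed, if $x_{t+1}r<x_{t+1}$, then $\ell(x_tr)<\ell(x_t)$ because $x_t=w_\circ(D_t)x_{t+1}$ is length-additive. So every $x_t$ lies in $W^J$.

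\textbf{Step 3: the main inductive claim.} I aim to prove by induction on $t$ an explicit description of $\mathrm{Inv}(x_t)$. The target is that $\mathrm{Inv}(x_t)$ is obtained from the filter $\Phi^+\setminus\Phi^+_{J^*}$ by applying $w_\circ(D_0),\dots,w_\circ(D_{t-1})$ and peeling off the successive parabolic blocks $\Phi^+_{D_i}$. Equivalently, the simple roots in $\mathrm{Inv}(x_t)$ are images of roots lying in a single layer of the root poset of $\Phi^+\setminus\Phi^+_{J^*}$ (a set of incomparable, same-height roots). In type $A$ this amounts to the following check: for a Grassmannian permutation, $i$ and $i+1$ cannot both be left descents, because that would require $i+1$ to lie in the first increasing block and in the second simultaneously. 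The general statement should be the root-theoretic version of this fact.

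\textbf{Step 4: conclude.} Suppose $a,b\in D_t$ did not commute. Then $\Phi^+_{\{a,b\}}\subseteq\mathrm{Inv}(x_t)$, and the root $\alpha_a+\alpha_b$, which is a positive root since $a,b$ do not commute, would also be an inversion. Pulling this configuration back to $x_0$ along the length-additive factorization $x_0=w_\circ(D_0)\cdots w_\circ(D_{t-1})\,x_t$ should produce two roots in the same layer whose sum is also in the filter, contradicting the layer description of Step 3.

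The main obstacle is Step 3. Left multiplication by $w_\circ(D_t)$ does not simply remove a height level from the inversion set, so the correct invariant is not just ``roots of height greater than $t$.'' I would first compute the orbits in types $B_3$ and $D_4$ to guess the right layer description, using that the orbit of $w_\circ^J$ has length at most $h-1$ by \cite{DefantCoxeterPop}. If that fails, I would fall back on a case-by-case verification through the classification of finite irreducible Coxeter groups and maximal parabolic subgroups, using minuscule or fully commutative structure where it is available.
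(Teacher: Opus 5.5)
There is a genuine gap. Step~3 is not one step among several; it is the whole content of the lemma. You neither state the invariant precisely nor prove it, and the fallback to ``case-by-case verification through the classification'' is also left unspecified.

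Moreover, the invariant as you sketch it would not close Step~4. Write $u_t=w_\circ(D_0)\cdots w_\circ(D_{t-1})$. The pullback places $u_t\alpha_a$, $u_t\alpha_b$, and $u_t\alpha_a+u_t\alpha_b$ in the filter $\Phi^+\setminus\Phi^+_{J^*}$. But two roots of the same height in this filter can sum to a root of the filter. For instance, label $D_5$ so that $\alpha_3$ is the branch node adjacent to $\alpha_2,\alpha_4,\alpha_5$, and take $s=s^*=s_3$. The height-$3$ roots $\alpha_1+\alpha_2+\alpha_3$ and $\alpha_3+\alpha_4+\alpha_5$, and their sum $\alpha_1+\alpha_2+2\alpha_3+\alpha_4+\alpha_5$, all lie in the filter. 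So a ``layer'' would have to be defined precisely so as to forbid such sums. That is essentially the commutation statement you are trying to prove. The height language would also need rethinking for $H_3$ and $H_4$, where $\alpha_a+\alpha_b$ is never a root when $m(a,b)=5$.

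Your Steps~2 and~4 do establish, with no Step~3 at all, the case in which every positive root has $\alpha_{s^*}$-coefficient at most $1$. Since $x_0=u_tx_t$ is length-additive, $u_t\,\mathrm{Inv}(x_t)\subseteq\mathrm{Inv}(x_0)$. Every root of $\mathrm{Inv}(x_0)$ then has $\alpha_{s^*}$-coefficient exactly $1$, whereas $u_t\alpha_a+u_t\alpha_b$ would have coefficient $2$. This covers type $A$ and is a root-theoretic form of the Grassmannian argument the paper uses there. The paper also uses, as you do in Step~2, that every $x_t$ lies in $W^J$. Type $I_2(m)$ is trivial: $w_\circ$ is the only element with two left descents, and $x_t\leqr w_\circ^J\neq w_\circ$.

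The paper splits on the parity of $h$. Odd $h$ forces type $A_{h-1}$ or $I_2(h)$, handled as above. The even-$h$ case is imported from \cite[Proposition~3.4]{DefantCoxeterPop} after inversion. Your proposal contains no argument for the remaining situations, namely non-minuscule nodes such as the branch node of $D_n$, or any node of $E_8$, $F_4$, $H_3$, $H_4$. That is precisely where you need either that citation or a genuinely new, precisely stated invariant.
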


\begin{proof}
First suppose that the Coxeter number $h$ of $W$ is even. The desired statement then follows from \cite[Proposition~3.4]{DefantCoxeterPop} after applying inversion to switch from right descents and right weak order to left descents and left weak order.

Now suppose that $h$ is odd. By the classification of finite irreducible Coxeter systems, $W$ is either of type $A_{h-1}$ or of type $I_2(h)$. First assume $W$ is of type $A_{h-1}$ so that we may identify $W$ with $S_h$ and write $S=\{s_1,\ldots,s_{h-1}\}$, where $s_i=(i\,\,i+1)$. Say $s=s_i$. The set $W^J$ is an order ideal in left weak order, and $\Pop(v)\leqr v$ for every $v\in W$. Hence, if
$v=\Pop^t(w_\circ^J)$, then $v\in W^J$. Consequently,
we have $v(1)<\cdots<v(i)$ and 
$v(i+1)<\cdots<v(h)$. If $s_m$ and $s_{m+1}$ were both left descents of $v$, then we would have
\[
v^{-1}(m)>v^{-1}(m+1)>v^{-1}(m+2).
\]
Among the three entries $m,m+1,m+2$, two would lie in the same one of the two increasing blocks displayed above, which is impossible. Thus, no two elements of $\Des(v)$ are adjacent in the Coxeter diagram, so they commute pairwise.

Finally, suppose that $W$ is of type $I_2(h)$. The element $w_\circ$ is the only element of $W$ with both simple reflections as left descents. Since $w_\circ^J\neq w_\circ$ and $\Pop^t(w_\circ^J)\leqr w_\circ^J$, the descent set of $\Pop^t(w_\circ^J)$ has at most one element. Hence, its elements commute pairwise.
\end{proof}

\begin{lemma}\label{lem:induction} 
Let $F\in\Mac(W)$. Fix $s\in S$, and let $J=S\setminus\{s\}$. For each integer $t\geq 0$, there exists a minimal element $b$ of the order filter $\MacPop^t(F)$ such that $b^J\leqb\Pop^t(w_\circ^J)$. 
\end{lemma}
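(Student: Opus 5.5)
The plan is to prove the statement by induction on $t$, following a single minimal element along the iteration. The inductive step will rest on three tools: Bruhat-monotonicity of the maps $\pi_u$ and of the parabolic projection $w\mapsto w^J$, the Lifting Property, and \cref{lem:commuting-descents}.

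\emph{Base case and setup.} For $t=0$, let $b$ be any minimal element of $F$. Since $b\leqb w_\circ$ and the projection $w\mapsto w^J$ is order-preserving for Bruhat order (a standard fact), we get $b^J\leqb w_\circ^J$. For the inductive step, put $G=\MacPop^t(F)$, $K=\Des(G)$ and $v=\Pop^t(w_\circ^J)$. Let $b\in\min(G)$ with $b^J\leqb v$. We must find $b'\in\min(\tau_{w_\circ(K)}(G))$ with $b'^J\leqb\Pop(v)=\pi_{w_\circ(\Des(v))}(v)$.

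Since $\pi_{w_\circ(K)}(b)\in\tau_{w_\circ(K)}(G)$, some minimal element $b'$ satisfies $b'\leqb\pi_{w_\circ(K)}(b)$, and hence $b'^J\leqb\pi_{w_\circ(K)}(b)^J$. So it suffices to show $\pi_{w_\circ(K)}(b)^J\leqb\Pop(v)$.

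\emph{Step 1 (reducing to $b^J$).} I will show $(\pi_u(x))^J=(\pi_u(x^J))^J$ for all $u,x\in W$. By induction on $\ell(u)$ it is enough to treat $u=s'\in S$. There are three cases.
\begin{itemize}
\item If $s'x^J<x^J$, then $s'$ is also a left descent of $x$, and $(s'x)^J=s'x^J$.
\item If $s'x^J>x^J$ and $s'x^J\in W^J$, then $s'\notin\Des(x)$, so neither side moves.
\item If $s'x^J=x^Jr$ with $r\in J$, then $\pi_{s'}(x)$ lies in the same coset $xW_J$, while $\pi_{s'}(x^J)=x^J$.
\end{itemize}
Consequently $\pi_{w_\circ(K)}(b)^J\leqb\pi_{w_\circ(K)}(b^J)$.

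\emph{Step 2 (handling descents of $v$ outside $K$).} By \cref{lem:commuting-descents}, $\Des(v)$ consists of pairwise commuting reflections. Therefore $w_\circ(\Des(v))$ is their product, and
\[
\Pop(v)=\pi_{w_\circ(\Des(v)\cap K)}\pi_{w_\circ(\Des(v)\setminus K)}(v).
\]
Take $s'\in\Des(v)\setminus K$. Because $\tau_{s'}(G)=G$, no minimal element of $G$ has $s'$ as a left descent: otherwise $s'b<b$ would lie in $G$, contradicting minimality. Since $\Des(b^J)\subseteq\Des(b)$, we also get $s'\notin\Des(b^J)$. The Lifting Property then gives $b^J\leqb s'v$. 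Commutativity ensures the remaining elements of $\Des(v)\setminus K$ stay left descents of $s'v$, and they are still non-descents of $b^J$. Iterating therefore yields
\[
b^J\leqb v_1:=\pi_{w_\circ(\Des(v)\setminus K)}(v).
\]

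\emph{Step 3 (combining).} By monotonicity of $\pi_{w_\circ(K)}$ we get $\pi_{w_\circ(K)}(b^J)\leqb\pi_{w_\circ(K)}(v_1)$. Since $\Des(v)\cap K\subseteq K$, we have $\pi_{w_\circ(K)}=\pi_{w_\circ(K)}\pi_{w_\circ(\Des(v)\cap K)}$, and each $\pi$ only goes down in Bruhat order. Hence
\[
\pi_{w_\circ(K)}(v_1)\leqb\pi_{w_\circ(\Des(v)\cap K)}(v_1)=\Pop(v).
\]
Chaining Steps 1–3 closes the induction.

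I expect Step 2 to be the main obstacle. The issue is that $K=\Des(G)$ need not contain $\Des(v)$. The commuting-descents lemma is exactly what allows the descents of $v$ outside $K$ to be peeled off one at a time by the Lifting Property without interfering with one another. Step 1 is a routine but careful coset case check.
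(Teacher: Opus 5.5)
Your proof is correct. It shares the paper's skeleton: induction on $t$, taking a minimal element below $\pi_{w_\circ(K)}$ of the previous minimal element, using minimality to get $\Des(b)\subseteq K$, and invoking \cref{lem:commuting-descents}. The key inequality in the inductive step, however, is reached by a different route.

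The paper passes through the intermediate element $\Pop(a^J)$ and imports two results from \cite{DefantCoxeterPop}:
\begin{itemize}
\item Lemma~3.5 (left-handed version), which is where commuting descents enter, gives $\Pop(a^J)\leqb\Pop^t(w_\circ^J)$.
\item Lemma~3.6 gives $(\Pop(a))^J\leqr\Pop(a^J)$. The paper combines it with $\pi_{w_\circ(K)}(a)\leqr\Pop(a)$ and with left-weak-order monotonicity of $w\mapsto w^J$, then converts back to Bruhat order.
\end{itemize}

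You never form $\Pop(b^J)$. Your Step~1 identity $(\pi_u(x))^J=(\pi_u(x^J))^J$ replaces the weak-order detour and Lemma~3.6. In fact your case check shows $\pi_{s'}$ maps $W^J$ into itself, so $(\pi_u(x))^J=\pi_u(x^J)$. Your Steps~2--3 replace the appeal to Lemma~3.5: you peel off the descents of $v$ outside $K$ one at a time via the Lifting Property, then absorb $\pi_{w_\circ(\Des(v)\cap K)}$ into $\pi_{w_\circ(K)}$, which proves $\pi_{w_\circ(K)}(b^J)\leqb\Pop(v)$ directly.

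The paper's version is shorter on the page because it outsources these facts, at the cost of translating right-handed statements and moving back and forth between weak and Bruhat order. Yours is self-contained, stays in Bruhat order throughout, and shows exactly where commuting descents matter: they keep the remaining descents of $v$ outside $K$ available for lifting. The one unstated ingredient is that your three cases in Step~1 are exhaustive. This is Deodhar's lemma for minimal left coset representatives, which is standard but worth citing.
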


\begin{proof}
We proceed by induction on $t$. If $t=0$, then the result is immediate because $F$ is nonempty and $w_\circ^J$ is the maximum
element of $W^J$.

Now assume $t\geq 1$, and let
\[
F_{t-1}=\MacPop^{t-1}(F)
\quad\text{and}\quad
K=\Des(F_{t-1}).
\]
By the induction hypothesis, there exists a minimal element $a$ of $F_{t-1}$ such that
\[
a^J\leqb \Pop^{t-1}(w_\circ^J).
\]
By \cref{lem:commuting-descents}, all of the left descents of $\Pop^{t-1}(w_\circ^J)$ commute with each other. Therefore, the left-handed version of \cite[Lemma~3.5]{DefantCoxeterPop}, obtained by applying inversion, yields that
\[
\Pop(a^J)\leqb \Pop^t(w_\circ^J).
\]

Now let $x=\pi_{w_\circ(K)}(a)$. Since $a\in F_{t-1}$, we have
\[
x\in \pi_{w_\circ(K)}(F_{t-1})\subseteq \nabla(\pi_{w_\circ(K)}(F_{t-1}))=\MacPop(F_{t-1})=\MacPop^t(F).
\]
Choose a minimal element $b$ of the order filter $\MacPop^t(F)$ such that $b\leqb x$.

We claim that
$b^J\leqb \Pop(a^J)$.
To prove this, we first show that $\Des(a)\subseteq K$. Indeed, let $r\in \Des(a)$. Then $\pi_r(a)=ra\leqb a$. Because $a$ is a minimal element of the order filter $F_{t-1}$, it follows that $ra\notin F_{t-1}$. On the
other hand,
\[
ra=\pi_r(a)\in \pi_r(F_{t-1})\subseteq \nabla(\pi_r(F_{t-1}))=\tau_r(F_{t-1}).
\]
Thus, $\tau_r(F_{t-1})\neq F_{t-1}$, so $r\in \Des(F_{t-1})=K$, as desired.

Since $\Des(a)\subseteq K$, we have
\[
w_\circ(\Des(a))\leqr w_\circ(K).
\]
Consequently,
\[
x=\pi_{w_\circ(K)}(a)\leqr \pi_{w_\circ(\Des(a))}(a)=\Pop(a).
\]
Applying the parabolic projection $w\mapsto w^J$, which is order-preserving for left weak order,
we find that 
$
x^J\leqr (\Pop(a))^J$.
It follows from \cite[Lemma~3.6]{DefantCoxeterPop} that
$(\Pop(a))^J\leqr \Pop(a^J)$.
Hence,
\[
x^J\leqr \Pop(a^J).
\]
Since weak order refines Bruhat order, it follows that
\[
x^J\leqb \Pop(a^J).
\] Finally, because $b\leqb x$ and the map $w\mapsto w^J$ is order-preserving for Bruhat order,
we have
\[
b^J\leqb x^J\leqb \Pop(a^J)\leqb \Pop^t(w_\circ^J),
\] as desired. 
\end{proof}

We are now in a position to prove \cref{thm:pop}, which states that $\G_{\Mac(W)}=h-1$, where $\G_{\Mac(W)}$ is the maximum number of iterations of $\MacPop$ needed to send elements of $\Mac(W)$ to the bottom element $W$ and $h$ is the Coxeter number of $W$.

\begin{proof}[Proof of \cref{thm:pop}]
We already saw in \eqref{eq:h} that $h-1\leq\G_{\Mac(W)}$, so we just need to prove the reverse inequality. Fix $F\in\Mac(W)$; our goal is to show that $\MacPop^{h-1}(F)=W$. 

For each $s\in S$, we know by \cref{lem:induction} that there exists an element $b_s\in\MacPop^{h-1}(F)$ such that $b_s^{S\setminus\{s\}}\leqb\Pop^{h-1}(w_\circ^{S\setminus\{s\}})$. We also know that $\Pop^{h-1}(w_\circ^{S\setminus\{s\}})=e$ since it is known that $\G_W=h-1$ \cite[Theorem~1.3]{DefantCoxeterPop}. It follows that $b_s\in W_{S\setminus\{s\}}$ for every $s\in S$. Thus, for every $s\in S$, the order filter $\MacPop^{h-1}(F)$ contains an element of $W_{S\setminus\{s\}}$. Each parabolic subgroup $W_{S\setminus\{s\}}$ is an order ideal in the Bruhat order, so every element of $\L(\MacPop^{h-1}(F))$ belongs to the intersection $\bigcap_{s\in S}W_{S\setminus\{s\}}$. However, this intersection is $\{e\}$, so $\L(\MacPop^{h-1}(F))=\{e\}$. Finally, 
\[\MacPop^{h-1}(F)=\U(\L(\MacPop^{h-1}(F)))=\U(\{e\})=W,\] as desired. 
\end{proof}

\section{MacNeille Complexes}\label{sec:complex}
Throughout this section, assume that $(W,S)$ is finite. For $J\subseteq S$, let $W_J$ be the parabolic subgroup generated by $J$, let $w_\circ(J)$ be the long element of $W_J$, and define $p_J=\tau_{w_\circ(J)}\colon\Mac(W)\to\Mac(W)$.
The $0$-Hecke relations imply that
\begin{equation}\label{eq:parabolic-absorption}
p_K\circ p_J=p_K\qquad\text{whenever }J\subseteq K\subseteq S.
\end{equation}
In particular, each $p_J$ is idempotent.

Let
\[
\mathcal F_{\Mac}(W,S)=\{(J,E):J\subseteq S\text{ and }E\in p_J(\Mac(W))\}.
\]
We partially order $\mathcal F_{\Mac}(W,S)$ by declaring that $(J',E')\leq (J,E)$ if and only if $J'\supseteq J$ and $E'=p_{J'}(E)$.
It follows from \eqref{eq:parabolic-absorption} that this is indeed a partial order. Because $\pi_{w_\circ}(w)=e$ for every $w\in W$, we have $p_S(F)=W$ for every $F\in\Mac(W)$. Hence, the unique minimum element is $(S,W)$, while the maximal elements are the pairs $(\varnothing,F)$ with $F\in\Mac(W)$. Moreover, the interval below $(J,E)$ is $\{(K,p_K(E)):J\subseteq K\subseteq S\}$, which is isomorphic to the Boolean lattice $2^{S\setminus J}$. Thus, $\mathcal F_{\Mac}(W,S)$ is a simplicial poset. We call its associated Boolean cell complex the \dfn{MacNeille complex} of $(W,S)$ and denote it by $\Delta_{\Mac}(W,S)$.

For $s\in S$ and a facet $F\in\Mac(W)$, let $v_s(F)=(S\setminus\{s\},p_{S\setminus\{s\}}(F))$. Let
\[C_F=\{v_s(F):s\in S\}.\] Then $C_F$ is the vertex set of $F$. By \eqref{eq:iota}, we have
\[p_J(\iota(w))=\iota(\pi_{w_\circ(J)}(w))\]
for all $J\subseteq S$ and $w\in W$. Consequently, the facets indexed by the principal filters $\iota(w)$ form a subcomplex naturally isomorphic to the Coxeter complex of $(W,S)$.

When $W=S_n$, the complex $\Delta_{\Mac}(W,S)$ recovers the complex studied by Hamaker and Reiner. Indeed, under the identification of $\Mac(S_n)$ with monotone triangles mentioned in \cref{subsec:1.3}, \cite[Proposition~4.1]{HR} states that $p_J$ fixes the rows corresponding to the simple reflections outside of $J$ and replaces the other rows by their unique componentwise-minimal completion. Hence, the face $(J,p_J(F))$ records the partial monotone triangle obtained by deleting the rows corresponding to the elements of $J$.

We now prove the identity that will allow us to recover each face from its vertices. For $I\subseteq S$, let
\[\rho_I=\pi_{w_\circ(I)}\colon W\to{}^I W,\]
where ${}^I W$ is the set of minimum-length representatives of the cosets in $W_I\backslash W$. Thus, $\rho_I(w)$ is the minimum-length element of the coset $W_Iw$.

\begin{lemma}\label{lem:projections}
For every $F\in\Mac(W)$ and $J\subseteq S$, we have
\[p_J(F)=\bigcap_{s\in S\setminus J}p_{S\setminus\{s\}}(F),\] 
where the intersection on the right is understood to be $W$ when $J=S$. 
\end{lemma}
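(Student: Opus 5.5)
The plan is to prove the two inclusions separately. First I will show that everything can be expressed via $\U$ of a projected set, which reduces the statement to a combinatorial fact about Bruhat order and the coset projections $\rho_I$.

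\emph{Reduction.} Write $F=\U(X)$ with $X=\L(F)$. Iterating \cref{Lem2} along a reduced word, using \cref{Cor1} and \cref{thm:descents} at each step, gives $\tau_u(\U(X))=\U(\pi_u(X))$ for every $u\in W$. Hence $p_J(F)=\U(\rho_J(X))$, and
\[\bigcap_{s\in S\setminus J}p_{S\setminus\{s\}}(F)=\U\Bigl(\bigcup_{s\in S\setminus J}\rho_{S\setminus\{s\}}(X)\Bigr).\]

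\emph{Easy inclusion ($\subseteq$).} For any $u$ and any order filter $G$, we have $\pi_u(x)\leqb x$, so $\tau_u(G)\supseteq G$. For $s\notin J$ we have $J\subseteq S\setminus\{s\}$, so \eqref{eq:parabolic-absorption} gives $p_{S\setminus\{s\}}(F)=p_{S\setminus\{s\}}(p_J(F))\supseteq p_J(F)$. When $J=S$ both sides equal $W$.

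\emph{Hard inclusion ($\supseteq$).} Fix $w$ with $\rho_{S\setminus\{s\}}(x)\leqb w$ for all $x\in X$ and all $s\notin J$. We must show $\rho_J(x)\leqb w$ for each $x\in X$. Put $v=\rho_J(x)\in{}^JW$ and $w'=\rho_J(w)\in{}^JW$; since $w'\leqb w$, it suffices to prove $v\leqb w'$. For $s\notin J$, absorption gives $\rho_{S\setminus\{s\}}(v)=\rho_{S\setminus\{s\}}(x)$ and $\rho_{S\setminus\{s\}}(w')=\rho_{S\setminus\{s\}}(w)$. Because $\rho_{S\setminus\{s\}}$ is order-preserving and idempotent, the hypothesis $\rho_{S\setminus\{s\}}(x)\leqb w$ yields
\[\rho_{S\setminus\{s\}}(v)\leqb\rho_{S\setminus\{s\}}(w').\]

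The remaining ingredient is a parabolic form of Deodhar's criterion. For $u,y\in{}^JW$, we need $u\leqb y$ if and only if $\rho_{S\setminus\{s\}}(u)\leqb\rho_{S\setminus\{s\}}(y)$ for every $s\in S\setminus J$. That is, Bruhat order on the quotient should be detected by the maximal parabolic quotients containing $J$. This is where I expect the main obstacle. The classical criterion for $J=\varnothing$ (cf.\ \cite{BB}) also requires comparisons for $s\in J$, and for elements of ${}^JW$ these are not automatically satisfied.

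My first attempt would be to cite Deodhar's original theorem, which I believe is stated for quotients $W^J$ with maximal parabolics $K\supseteq J$, translated to the left-coset setting by inversion. Failing that, I would try to derive it from the $J=\varnothing$ case by induction on $|J|$. The idea is to use that for $u\in{}^JW$ the coset $W_Ku$ with $K\supseteq J$ interacts with $\rho_K$ via the factorization ${}^KW\cdot$ (elements of $W_K\cap{}^JW$). One can then check the condition for $s\in J$ by lifting the comparison along the maximal-length representatives.
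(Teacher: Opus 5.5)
Your proposal is correct and follows the paper's proof step for step: the identity $p_I(F)=\U(\rho_I(\L(F)))$ obtained by iterating \cref{Lem2}, the easy inclusion, and the reduction of the hard inclusion to comparing $\rho_J(d)$ and $\rho_J(y)$ in ${}^JW$ through the maximal projections $\rho_{S\setminus\{s\}}$ for $s\in S\setminus J$. The parabolic form of Deodhar's criterion that you single out as the main obstacle is exactly the fact the paper uses at that point, citing \cite[Corollary~1.4]{Stembridge}, so citing that result suffices and no fallback induction on $|J|$ is needed; your easy inclusion, via $\tau_u(G)\supseteq G$ and \eqref{eq:parabolic-absorption}, is only a cosmetic variant of the paper's elementwise argument.
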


\begin{proof}
Iterating \cref{Lem2} along a reduced word for $w_\circ(I)$, we find that $p_I(F)=\U(\rho_I(\L(F)))$ for every $I\subseteq S$. Consequently,
\begin{equation}\label{eq:parabolic-membership}
y\in p_I(F)\text{ if and only if }\rho_I(d)\leqb y\text{ for every }d\in\L(F).
\end{equation}

Suppose first that $y\in p_J(F)$, and fix $s\in S\setminus J$. For each $d\in\L(F)$, we have $\rho_J(d)\leqb y$. Because $J\subseteq S\setminus\{s\}$, we also have $\rho_{S\setminus\{s\}}\circ\rho_J=\rho_{S\setminus\{s\}}$. It follows that
\[
\rho_{S\setminus\{s\}}(d)=\rho_{S\setminus\{s\}}(\rho_J(d))\leqb\rho_J(d)\leqb y.
\]
Hence, $y\in p_{S\setminus\{s\}}(F)$ by \eqref{eq:parabolic-membership}. This proves one containment.

Conversely, suppose that $y\in p_{S\setminus\{s\}}(F)$ for every $s\in S\setminus J$. Fix $d\in\L(F)$. By \eqref{eq:parabolic-membership}, we have 
$\rho_{S\setminus\{s\}}(d)\leqb y$
for every $s\in S\setminus J$. Applying the order-preserving map $\rho_{S\setminus\{s\}}$ to each of these inequalities and using idempotence, we obtain that
$\rho_{S\setminus\{s\}}(d)\leqb\rho_{S\setminus\{s\}}(y)$ for every $s\in S\setminus J$.
Using $\rho_{S\setminus\{s\}}\circ\rho_J=\rho_{S\setminus\{s\}}$, we can rewrite these inequalities as
\[
\rho_{S\setminus\{s\}}(\rho_J(d))\leqb\rho_{S\setminus\{s\}}(\rho_J(y))\] for every $s\in S\setminus J$. Both $\rho_J(d)$ and $\rho_J(y)$ belong to ${}^J W$. It follows from \cite[Corollary~1.4]{Stembridge} that
\[\rho_J(d)\leqb\rho_J(y)\leqb y.\]
Since this holds for every $d\in\L(F)$, \eqref{eq:parabolic-membership} implies that $y\in p_J(F)$.
\end{proof}

We are now in a position to prove \cref{thm:complex}.

\begin{proof}[Proof of \cref{thm:complex}]
Consider a face $(J,E)$ of $\Delta_{\Mac}(W,S)$. Its vertices are
\[
\{(S\setminus\{s\},p_{S\setminus\{s\}}(E)):s\in S\setminus J\}.
\]
These vertices determine $J$. Because $E\in\operatorname{im}(p_J)$, we have $p_J(E)=E$, so \cref{lem:projections} gives
\[
E=\bigcap_{s\in S\setminus J}p_{S\setminus\{s\}}(E).
\]
Thus, every face is determined by its set of vertices. It follows that the Boolean cell complex $\Delta_{\Mac}(W,S)$ is an abstract simplicial complex. Each facet has one vertex of type $S\setminus\{s\}$ for every $s\in S$, so this complex is pure of dimension $|S|-1$.

It remains to prove shellability. We follow the argument in \cite[Section~4]{HR}. Let $F_1,\ldots,F_N$ be a linear extension of the weak order on $\Mac(W)$, and write $C_i=C_{F_i}$. A standard criterion for shellings states that it suffices to prove that whenever $i<j$, there exists $k<j$ such that $C_i\cap C_j\subseteq C_k\cap C_j$ and $C_k\cap C_j$ is a ridge of $C_j$.

Fix $i<j$, and let $J=\{s\in S:v_s(F_i)\neq v_s(F_j)\}$.
We claim that $J\cap\Des(F_j)$ is nonempty. Suppose otherwise. Then $\tau_s(F_j)=F_j$ for every $s\in J$, so $p_J(F_j)=F_j$. On the other hand, \cref{lem:projections} and the definition of $J$ imply that
\[p_J(F_i)
=\bigcap_{s\in S\setminus J}p_{S\setminus\{s\}}(F_i)=\bigcap_{s\in S\setminus J}p_{S\setminus\{s\}}(F_j)
=p_J(F_j)=F_j.
\] Hence, $F_j\leqr F_i$, contradicting the fact that $F_i$ appears before $F_j$ in a linear extension.

Choose $s\in J\cap\Des(F_j)$, and let $F_k=\tau_s(F_j)$. Then $F_k<_{\mathsf L}F_j$, so $k<j$. If $t\neq s$, then $s\in S\setminus\{t\}$, and \eqref{eq:parabolic-absorption} gives
\[
p_{S\setminus\{t\}}(F_k)=p_{S\setminus\{t\}}(\tau_s(F_j))=p_{S\setminus\{t\}}(F_j).
\]
Thus, $C_k$ and $C_j$ have the same vertex of type $S\setminus\{t\}$ for every $t\neq s$. Since $F_k\neq F_j$ and facets are determined by their vertex sets, their vertices of type $S\setminus\{s\}$ are different. Therefore,
\[
C_k\cap C_j=C_j\setminus\{v_s(F_j)\},
\]
which is a ridge of $C_j$. Since $s\in J$, we also have $C_i\cap C_j\subseteq C_k\cap C_j$. This proves that $C_1,\ldots,C_N$ is a shelling.
\end{proof}

\section{AI Usage}\label{sec:AI}  
The initial seed of this project was planted in 2022, when Zach Hamaker told the author about an idea he and Vic Reiner had discussed that did not make it into their paper. He explained how one could extend the ASM weak order to the setting of arbitrary Coxeter groups, provided one could prove \cref{thm:descents}. The author found a proof, which he kept as a note on his computer. In 2023, an undergraduate at MIT asked to do research with the author, who suggested trying to prove \cref{thm:pop} in type~$A$ using the combinatorics of alternating sign matrices and monotone triangles. It turns out this was the wrong approach; the author later realized how to give a type-uniform proof using a subtle inductive argument that passes back and forth between the weak order and the Bruhat order. By that time, the student had pivoted to a different project, so the author added a sketch of the proof of \cref{thm:pop} to his earlier notes. 

In May of 2026, the author uploaded the \LaTeX\ file of his notes to ChatGPT~5.4 Pro and asked it to fill in the gaps. Not only did it add in the details perfectly, but it also caught numerous typos. It rewrote the notes in a matter of minutes. At this point, the author asked ChatGPT to suggest other directions to explore related to the weak order on MacNeille completions. ChatGPT suggested proving the conjecture of Hamaker and Reiner regarding the poset topology of open intervals in the ASM weak order. The author asked ChatGPT to prove this conjecture. Instead, it returned the counterexample shown in \cref{fig:counterexample}. 

At this point, the author opened a new conversation with ChatGPT and uploaded his newly polished notes and the article of Escobar, Klein, and Weigandt \cite{EKW}. He asked, ``Could some of the ideas from the paper of Escobar, Klein, and Weigandt generalize to other Coxeter groups using my notion of weak order on $\Mac(W)$? Could we define analogues of ASM varieties in other types?'' ChatGPT responded by suggesting definitions of ASM varieties for other Weyl groups using unions of opposite Schubert varieties in the flag variety $G/B$. The author then asked, ``Can you find a counterexample to Conjecture~3.21 in the EKW paper?'' After thinking for around 50 minutes, ChatGPT responded by saying it had not found a counterexample and that it verified the conjecture for all $n\leq 7$. The author asked, ``In that case, can you prove Conjecture~3.21?'' After thinking for around 19 minutes, ChatGPT responded with the main ingredients of the proof. The proof of \cref{thm:CM} presented in \cref{sec:CM} is essentially the one that ChatGPT found, although the author massaged the writing style and added several details.
In the end, this proof is not terribly intricate, and several of the steps are somewhat routine. Probably the most significant advance is \cref{lem:deletion}.

After the first version of this paper appeared, Vic Reiner sent the author an earlier draft of \cite{HR} containing the MacNeille complex construction and asked when the resulting Boolean cell complex is simplicial. The author discussed this question with ChatGPT. Its initial attempts at a proof incorrectly treated $\nabla(X)$ as the set of common upper bounds of $X$, rather than the order filter generated by $X$; the author pointed out this error. The corrected discussion led to the maximal-parabolic identity in \cref{lem:projections}, which proves simpliciality. Once this identity was in place, the shelling argument from \cite[Section~4]{HR} carried over essentially verbatim, yielding \cref{thm:complex}.

\section*{Acknowledgments} 
The author thanks Zach Hamaker and Vic Reiner for suggesting the Coxeter-theoretic definition of weak order on $\Mac(W)$, and he thanks Vic Reiner for bringing the MacNeille complex construction to his attention. He thanks Katherine Tung for pointing him to Jahn's thesis. He thanks an anonymous referee for helpful comments. He also thanks OpenAI for providing unlimited access to ChatGPT~5.4 Pro.


\begin{thebibliography}{999999999} 
\bibitem[AFHKKL]{AFHKKL}
I. Axelrod-Freed, H. Hao, M. Kendall, P. Klein, and Y. Luo. Some algebraic properties of ASM varieties. \emph{Glasg.\ Math.\ J.}, {\bf 68} (2026), 440--459.  

\bibitem[BB05]{BB}
A. Bj\"orner and F. Brenti. Combinatorics of Coxeter groups, vol. 231 of Graduate Texts in Mathematics. Springer, 2005. 

\bibitem[BC17]{BC17} 
N. Bergeron and C. Ceballos. A Hopf algebra of subword complexes. \emph{Adv. Math.}, {\bf 305} (2017), 1163--1201. 

\bibitem[CG19]{CG}
A. Claesson and B.\'A. Gu{\dh}mundsson. Enumerating permutations sortable by $k$ passes through a pop-stack. \emph{Adv. Appl. Math.}, {\bf 108} (2019), 79--96. 

\bibitem[CGP23]{CGP} 
A. Claesson, B. \'A. Gu{\dh}mundsson, and J. Pantone. Counting pop-stacked permutations in polynomial time. \emph{Exp. Math.}, {\bf 32} (2023), 97--104. 

\bibitem[Def22A]{DefantMeeting}
C. Defant. Meeting covered elements in $\nu$-Tamari lattices.\ \emph{Adv.\ Appl.\ Math.}, {\bf 134} (2022).  

\bibitem[Def22B]{DefantCoxeterPop}
C. Defant. Pop-stack-sorting for Coxeter groups. \emph{Comb. Theory}, {\bf 2} (2022). 

\bibitem[EG21]{Elder}
M. Elder and Y. K. Goh. $k$-pop stack sortable permutations and 2-avoidance. \emph{Electron. J. Combin.}, {\bf 28} (2021). 

\bibitem[EKW]{EKW}
L. Escobar, P. Klein, and A. Weigandt. Algebra and geometry of ASM weak order. arXiv:2502.19266. 

\bibitem[EM18]{EM}
L. Escobar and K. M\'esz\'aros. Subword complexes via triangulations of root polytopes. \emph{Algebr. Comb.}, {\bf 1} (2018), 395--414. 

\bibitem[Ful92]{Fulton}
W. Fulton. Flags, Schubert polynomials, degeneracy loci, and determinantal formulas. \emph{Duke Math. J.}, {\bf 65} (1992), 381--420.  

\bibitem[HR20]{HR} 
Z. Hamaker and V. Reiner. Weak order and descents for monotone triangles. \emph{European J. Combin.}, {\bf 86} (2020). 

\bibitem[Jah22]{Jahn}
D. Jahn. Combinatorics of subword complexes for finite type Coxeter systems. Ph.D. thesis, Ruhr-Universit\"at Bochum, 2022.

\bibitem[KM04]{KM04}
A. Knutson and E. Miller. Subword complexes in Coxeter groups. \emph{Adv. Math.}, {\bf 184} (2004), 161--176. 

\bibitem[KM05]{KM05}
A. Knutson and E. Miller. Gr\"obner geometry of Schubert polynomials. \emph{Ann. of Math.}, {\bf 161} (2005), 1245--1318. 

\bibitem[PS15]{PS15} 
V. Pilaud and C. Stump. Brick polytopes of spherical subword complexes and generalized associahedra. \emph{Adv. Math.}, {\bf 276} (2015), 1--61. 

\bibitem[STW25]{STW25} 
C. Stump, H. Thomas, and N. Williams. Cataland: why the {F}u{\ss}? \emph{Mem. Amer. Math. Soc.}, {\bf 305} (2025). 

\bibitem[Ste05]{Stembridge}
J. R. Stembridge. Tight quotients and double quotients in the Bruhat order. \emph{Electron. J. Combin.}, {\bf 11} (2004/06), no.~2, Research Paper~14. 

\bibitem[Wei17]{Wei17}
A. Weigandt. Prism tableaux for alternating sign matrix varieties. arXiv:1708.07236. 
\end{thebibliography}
\end{document}